\DeclareMathOperator{\ind}{\mathrm{ind}} 
\newcommand{\R}{\mathbb{R}}
\newcommand{\N}{\mathbb{N}} 
\newcommand{\Z}{\mathbb{Z}}
\newcommand{\sign}{\mathop\mathrm{sign}\nolimits}
\newcommand{\D}{\partial}
\newcommand{\X}{\times}
\newcommand{\cl}[1]{\overline{#1}}
\newcommand{\caratt}[1]{\mathds{1}_{#1}}
\newtheorem{definition}{Definition}[section]
\newtheorem{theorem}{Theorem}[section]
\newtheorem{corollary}[theorem]{Corollary}
\newtheorem{lemma}[theorem]{Lemma}
\newtheorem{example}[theorem]{Example}
\newtheorem{proposition}[theorem]{Proposition}
\newtheorem{remark}[theorem]{Remark}
\renewcommand{\emph}[1]{\textbf{#1}}
\numberwithin{equation}{section}
\date{\today}
\begin{document}

\title[On a class of coupled differential equations on manifolds]{On
  the set of harmonic solutions of a class of perturbed coupled and
  nonautonomous differential equations on manifolds} \author[L.\
Bisconti]{Luca Bisconti} \email[L.\ Bisconti]{luca.bisconti@unifi.it}
\author[M.\ Spadini]{Marco Spadini} \email[M.\
Spadini]{marco.spadini@unifi.it} \keywords{Coupled differential
  equations on manifolds, branches of periodic solutions, fixed point
  index} \subjclass[2000]{34C25, 34C40}

\begin{abstract} We study the set of $T$-periodic solutions of a class
  of $T$-periodically perturbed coupled and nonautonomous differential
  equations on manifolds. By using degree-theoretic methods we obtain
  a global continuation result for the $T$-periodic solutions of the
  considered equations.
\end{abstract}

\maketitle

\section{Introduction} 
This paper is concerned with topological properties of the set of harmonic solutions of 
a class of parametrized coupled periodic differential equations. 
Namely, we consider a perturbed  periodic linear nonhomogeneous differential equation in $\R^k$
coupled with a (tangent) periodic perturbation of the zero vector field on a smooth differentiable 
manifold. More precisely, let $M \subseteq \R^s$ be a boundaryless smooth manifold, given $T>0$, 
we consider $T$-periodic solutions to the following system of equations on $\R^k\X M$
\begin{equation} \label{eq:main} 
\left\{ \begin{array}{l} \dot x =
      A(t)x + c(t) + \lambda f_1(t, x, y,\lambda), \\
      \dot y = \lambda f_2(t, x, y,\lambda),
    \end{array} \right. \quad
  \lambda\geq 0,
\end{equation}
where $A\colon\R\to GL(\R^k)\subseteq\R^{k\X k}$, is a continuous matrix-valued map, 
$c\colon\R\to\R^k$ is a sufficiently regular vector-valued map, $f_1$ and $f_2$ are continuous 
with $f_1\colon\R \times\R ^k\times M\times[0,\infty)\to\R^k$ and
$f_2\colon\R\times\R^k\times M\times[0,\infty)\to\R^s$ and, in particular, $f_2$ is a tangent 
vector field to $M$, in the sense that for every $(t, p, q,\lambda)\in\R\times\R^k\times M\times[0,\infty)$
it holds that $f_2(t, p, q,\lambda)\in T_qM$. Moreover, all of these maps are assumed to be $T$-periodic, 
$T>0$ given, with respect to the $t$-variable.

By applying fixed point index and degree-theoretic methods, in our main result (Theorem \ref{tuno} below) we deduce a global continuation principle for the $T$-periodic solutions of \eqref{eq:main}. More precisely, assuming a non-$T$-resonance condition on $A$  we provide a condition based on the degree of the vector field $f_2$ averaged along a solution of the first equation of \eqref{eq:main} for $\lambda=0$, that ensures the existence of a connected set of (nontrivial i.e.\ corresponding to $\lambda>0$, see definition \ref{deftrip} below) $T$-periodic solutions of \eqref{eq:main} whose closure is not compact emanating from a specific set $\Gamma$ of $T$-periodic solutions of the unperturbed \eqref{eq:main}.

\smallskip
The arrangement of equations in \eqref{eq:main} may look unusual at first sight but it is pretty natural in 
some contexts. Consider, for instance, a moving point in the plane or space whose radial distance from the origin is governed by a possibly perturbed linear equation (the perturbation itself depending on the position in the plane or space), while its ``angular'' position with respect to any given frame of reference is influenced only by a small forcing. The motion of such a point in the plane could be described by a curve in polar coordinates $t\mapsto\big(r(t),\theta(t)\big)\in\R\X S^1$ (allowing negative $r$ in the usual sense) that is determined by a system of equations as follows:
\begin{equation*}
\left\{ \begin{array}{ll} 
  \dot r = a(t)r + c(t) + \lambda f_1(t, r, \theta,\lambda), & r\in\R \\
  \dot \theta = \lambda f_2(t, r, \theta,\lambda), & \theta\in S^1
    \end{array} \right. \quad
  \lambda\geq 0,
\end{equation*}
where $f_1$ and $f_2$, that represent perturbations, are $T$-periodic in $t$ of the same period as the scalar 
functions $a$ and $c$.

In fact the form of system \eqref{eq:main} is rather flexible. In later sections we will show, throughout some examples, how it lends itself to quite distant applications.

To get a rough idea of the kind of result that we seek, consider the following equation in $\R\X\R$:
\begin{equation}\label{nicexa}
 \left\{\begin{array}{l}
         \dot x = -x +1+\frac{1}{10}\sin t -\lambda\big|y-x\big|,\\
         \dot y = -\lambda\left(\frac{1}{2}+y+2x\sin t\right).
        \end{array}\right.
\end{equation}
The numerical diagram in figure \ref{figuno} shows a portion of the set of points $(\lambda,x_0^\lambda,y_0^\lambda)$ where $(x_0^\lambda,y_0^\lambda)$ is the initial data (for $t=0$) of a $T$-periodic solutions of \eqref{nicexa} corresponding to $\lambda$.
\begin{figure}[ht!]
 \centering
 \includegraphics[width=0.79\linewidth]{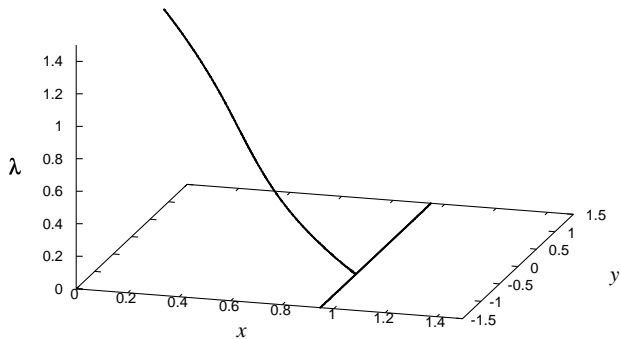}
 \caption{Initial points of $T$-periodic solutions of \eqref{nicexa} }\label{figuno}
\end{figure}

Roughly speaking, Theorem \ref{tuno} yields a connected set $\Gamma$ of ``nontrivial'' $T$-periodic solutions emanating from a subset of initial points as in the figure \ref{figuno}. Later on, we will analyze the behavior of this set in more detail (see Definiton \ref{defst} and Theorem \ref{tdue} below).

\smallskip
On a different tack, observe that system \eqref{eq:main} is a periodic perturbation of a nonautomous 
differential equation. In general, periodic problems of such equations are rather difficult to study,
at least with topological methods. One reason being that it is not easy to compute the fixed point
index of the Poincar\`e $T$-translation operator associated with a nonautonomous equation. The present 
study does this in a particular case. A different situation, but in the same direction was considered 
in \cite{SpSepVar} where perturbations of separated variables equations are tackled by a change of 
variable approach. Here, the difficulties arising from the fact that the unperturbed part of system 
\eqref{eq:main} is nonautonomous are solved thanks to the simple observation that the first equation 
in of \eqref{eq:main} admits a unique $T$-periodic solution when $\lambda=0$ and some assumptions are 
made on $A$.

The technique used here could probably be generalized to the case when the unperturbed first equation of \eqref{eq:main} is nonlinear but admits a finite number of ``nondegenerate'' periodic solutions.  Moreover, our methods open the way to the analysis of coupled differential equations of type \eqref{eq:main} involving delays in the perturbing term (see, e.g., \cite{Bi-Spa-NoDEA,Bi-Spa-TMNA, Fu-Spa-delay} for some recent contributions about similar issues). These lines of research would take us too far from the core topic of the present paper, but will be investigated in future studies.  \smallskip

Finally, we point out that our arguments rely on the notions of degree (or characteristic) of a tangent vector field and of fixed point index of a map on a manifold. For an exposition, we refer to standard texts as, e.g., \cite{difftop,milnor}.

\section{Preliminaries and main result} \label{sec:notation} We begin this section by recalling 
some basic facts and definitions about the function spaces used throughout the paper. 

Let $I\subseteq\R$ be an interval and let $X\subseteq\R^n$.  The set of all $X$-valued 
functions defined on $I$ is denoted by $C(I,X)$. When $I=\R$, we simply write $C(X)$
instead of $C(\R,X)$. Let $T>0$ be given, by $C_T (\R^n)$ we mean the Banach space
of all the continuous $T$-periodic functions $\zeta\colon\R\to\R^n$ whereas $C_T (X)$ denotes 
the metric subspace of $C_T (\R^n)$ consisting of all those $\zeta\in C_T (\R^n)$ that take values 
in $X$. In fact, $X$ can be regarded as a subset of $C_T(X)$ by identifying it with the image of
the closed embedding that associates to any point $q\in X$ the constant function $\bar q(t)\equiv q$.
Also, it is not difficult to prove that $C_T (X)$ is complete if and only if $X$ is closed in $\R^n$.

Let $Y\subseteq \R^k$ and $W\subseteq \R^s$ with $Y\X W\subseteq \R^k\X \R^s\cong \R^n$.  There is 
a natural homeomorphism between $C_T(Y)\X C_T(W)$ and $C_T(Y\X W)$. Then, for the reminder of the
paper we will use $C_T(Y\X W)$ and $C_T(Y)\X C_T(W)$ interchangeably, and denote the elements of 
$C_T(Y\X W)$ as pairs $(x,y)$ with $x\in C_T(Y)$ and $y\in C_T(W)$.

In the sequel, if $\mathfrak{S}$ is a topological space, given $A\subseteq [0,\infty)\X\mathfrak{S}$ and $\lambda\in[0,\infty)$, we will denote the $\lambda$-slice $\{x\in\mathfrak{S}:(\lambda,x)\in A\}$ by the symbol $A_\lambda$. Observe that when $A$ is open in $[0,\infty)\X\mathfrak{S}$ then so is $A_\lambda\subseteq\mathfrak{S}$.

\medskip
Our main result concerns the properties of the set of the $T$-periodic solutions of \eqref{eq:main}. Let 
us recall the following definition:

\begin{definition}\label{deftrip}
  A triple $(\mu, x, y\big) \in [0, +\infty) \X C^1(\R^k\X M)$ is called a $T$-\emph{triple} for \eqref{eq:main} if $(x, y)$ is a $T$-periodic solution of \eqref{eq:main} when $\lambda=\mu$. A $T$-triple $\big(\lambda, x, y\big)$ is called \emph{trivial} if $\lambda=0$.
\end{definition}

Observe that a trivial $T$-triple $(0,x,y)$ has necessarily a specific form: $y$ is clearly constant and $x$ is a $T$-periodic solution of the ODE
\begin{equation} \label{eq:linear-ODE} 
\dot x = A(t) x + c(t),
\end{equation}
where $A$ and $c$ are as in \eqref{eq:main}. 

Let $\Phi=\Phi(t)\in \R^{k\X k}$ be the principal fundamental matrix of equation $\dot x(t)=A(t)x(t)$. Namely, $\Phi\colon\R\to\R^{k\X k}$ is such that
  \begin{equation*}
    \dot \Phi(t) = {A}(t) \Phi(t)\, \textrm{ with }\,  \Phi(0)=I:=I_{k\X k}.
  \end{equation*}
Throughout this section we posit the following non-$T$-resonance assumption on $A$:
\begin{equation}\label{noTres}
 \text{$I-\Phi(T)$ is invertible.}
\end{equation}
When $A(t)\equiv A$ is a constant matrix, we have that $\Phi(t)=e^{tA}$ and the condition \eqref{noTres} can be written as $\det(I-e^{TA})\neq 0$ which holds true if and only if $A$ has no eigenvalues of the form $2n\pi i/T$, $n\in\Z$, $i$ being the imaginary unit.

When the matrix-valued function $A$ has the ``non-$T$-resonance'' property above (see also, e.g., \cite{BS15}) something can be said about the $T$-periodic solutions of \eqref{eq:linear-ODE}, as shown in the following well-known technical lemma whose standard proof is provided only for the sake of completeness:

\begin{lemma} \label{lem:utility-result} 
Assume that $I-\Phi(T)$ is invertible. Then, equation \eqref{eq:linear-ODE} admits a unique 
$T$-periodic solution given by:
\begin{equation} \label{eq:solution} 
\hat x (t):=\Phi(t)\left(
    \big(I - \Phi(T)\big)^{-1}\Phi(T) \int_0^T \Phi(s)^{-1} c(s)ds +\int_0^t \Phi(s)^{-1} c(s)ds\right) \;.
  \end{equation}
\end{lemma}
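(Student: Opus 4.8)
The plan is to use the standard variation-of-parameters formula and then impose the periodicity condition $x(T) = x(0)$ to pin down the initial datum. First I would write the general solution of \eqref{eq:linear-ODE} with initial value $x(0) = x_0$ as
\begin{equation*}
x(t) = \Phi(t)x_0 + \Phi(t)\int_0^t \Phi(s)^{-1} c(s)\,ds,
\end{equation*}
which one verifies directly by differentiating and using $\dot\Phi = A\Phi$. A $T$-periodic solution of a $T$-periodically-forced linear equation is the same thing as a solution with $x(T) = x(0)$, since by uniqueness of the Cauchy problem any solution agreeing with a periodic one at $t=0$ must coincide with it, and conversely $t\mapsto x(t+T)$ solves the same equation with the same initial datum when $x(T)=x(0)$.

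Next I would impose $x(T) = x_0$, namely
\begin{equation*}
x_0 = \Phi(T)x_0 + \Phi(T)\int_0^T \Phi(s)^{-1} c(s)\,ds,
\end{equation*}
so that $\big(I - \Phi(T)\big)x_0 = \Phi(T)\int_0^T \Phi(s)^{-1} c(s)\,ds$. Here the non-$T$-resonance hypothesis \eqref{noTres} enters decisively: $I-\Phi(T)$ being invertible, this linear system has the unique solution
\begin{equation*}
x_0 = \big(I-\Phi(T)\big)^{-1}\Phi(T)\int_0^T \Phi(s)^{-1} c(s)\,ds.
\end{equation*}
Substituting this value of $x_0$ back into the variation-of-parameters formula yields exactly \eqref{eq:solution}, and uniqueness of the $T$-periodic solution follows because $x_0$ was forced to be this particular vector.

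There is no real obstacle here — the lemma is classical and the argument is a short computation; the only points requiring a word of care are the equivalence ``$T$-periodic solution $\iff$ solution with $x(T)=x(0)$'' (which rests on uniqueness for the Cauchy problem, valid since $A$ and $c$ are continuous) and making sure the algebra in passing from the periodicity equation to the closed form matches the stated expression. I would also remark in passing that $\Phi(t)$ is invertible for every $t$ (its inverse solves the adjoint equation), so all the inverses appearing in \eqref{eq:solution} are well defined. Accordingly I would keep the write-up brief, as the excerpt itself signals (``standard proof is provided only for the sake of completeness'').
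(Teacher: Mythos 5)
Your proof is correct and follows exactly the paper's argument: variation of parameters, imposing $x(T)=x_0$, solving the resulting linear system for $x_0$ via the invertibility of $I-\Phi(T)$, and substituting back. The extra remarks you include (the equivalence of $T$-periodicity with $x(T)=x(0)$ via uniqueness, and the invertibility of $\Phi(t)$) are sound and merely make explicit points the paper leaves tacit.
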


\begin{proof}
The initial value problem
\begin{equation} \label{IVP} 
  \left\{ \begin{array}{l} \dot x =
        A(t) x + c(t),\\
        x(0)=x_0,
      \end{array} \right.
\end{equation}
admits a unique solution that can be written as
\begin{equation}\label{linsol}
    x(t)=\Phi(t)x_0+\Phi(t) \int_0^t\Phi(s)^{-1} c(s) ds.
\end{equation}
We seek initial values $x_0$ that correspond to $T$-periodic solutions. Hence, we put
\[
 x_0=x(T)=\Phi(T)x_0+\Phi(T) \int_0^T\Phi(s)^{-1} c(s) ds,
\]
and solve for $x_0$. Since $I-\Phi(T)$ is invertible we get
\begin{equation*}
    x_0=\big(I- \Phi(T)\big)^{-1}\Phi(T)\int_0^T \Phi(s)^{-1} c(s )ds.
\end{equation*}
which is the unique initial condition that yields a $T$-periodic solution of \eqref{IVP}.
Susbstituting in \eqref{linsol} we get the assertion.
\end{proof}

Observe that when $A(t)\equiv A$ is a constant matrix, \eqref{eq:solution} assumes a slightly nicer-looking form:
\begin{equation*}% \label{eq:solution_nice}
 \hat x (t)=e^{tA}\left((I-e^{TA})^{-1}e^{TA}\int_0^T e^{-sA} c(s)ds+\int_0^t e^{-sA} c(s)ds\right) \;.
\end{equation*}
\smallskip

We introduce two projections $\pi_1\colon [0,\infty)\times C_T(\R^k\times M)\to [0,\infty)\times C_T(\R^k)$ 
and $\pi_2\colon [0,\infty)\times C_T(\R^k\times M)\to [0,\infty)\times C_T(M)$ given by
\begin{equation}\label{proiez}
\left.
\begin{array}{l}
\pi_1(\lambda,x,y):=(\lambda,x)\\
\pi_2(\lambda,x,y):=(\lambda,y)
\end{array}
\right\}\quad\text{for all $(\lambda,x,y)\in [0,\infty)\times C_T(\R^k\times M)$}
\end{equation}
Also, it is useful to introduce the following notation:
Given $\Omega\subseteq [0,\infty)\times C_T(\R^k\times M)$ open, let $\mathcal{O}_1(\Omega)\subseteq\R^k$, 
and $\mathcal{O}_2(\Omega)\subseteq M$ be the open sets given, respectively, by
\[
 \mathcal{O}_1(\Omega):=\big\{p\in\R^k:(0,\cl{p})\in\pi_1(\Omega)\big\},
 \quad\text{and}\quad  
 \mathcal{O}_2(\Omega):=\big\{q\in M:(0,\cl{q})\in\pi_2(\Omega)\big\}.
\]
Actually, $\mathcal{O}_1(\Omega)$ and $\mathcal{O}_2(\Omega)$ can be seen as ``finite dimensional versions'' of the $0$\hbox{-}slices $\big(\pi_1(\Omega)\big)_0$ and $\big(\pi_2(\Omega)\big)_0$, respectively. In fact, one clearly has $\mathcal{O}_1(\Omega)=\big\{p\in\R^k:\cl{p}\in\big(\pi_1(\Omega)\big)_0\big\}$ and $\mathcal{O}_2(\Omega)=\big\{q\in M:\cl{q}\in\big(\pi_2(\Omega)\big)_0\big\}$.

It is also convenient to define the following average vector field:
\begin{equation}\label{mainaverage}
   w(q):=\frac{1}{T}\int_0^T f_2\big(t,\hat x(t),q,0\big)\, dt,\qquad q\in M,
\end{equation}
where $f_2$ and $\hat x$ are as in \eqref{eq:main} and \eqref{eq:solution}, respectively. Clearly, $w$ is tangent
to $M$.

We are now in a position to state our main result; its proof, that requires several preliminary steps, is
postponed to a later section.

\begin{theorem}\label{tuno} Let $A$, $c$, $f_1$ and $f_2$ be as in \eqref{eq:main}, and let $\hat x$ be as in 
\eqref{eq:solution}. Let $w$ be as in \eqref{mainaverage}. Let $\Omega$ be a given 
open subset of $[0,\infty)\times C_T(\R^k\X M)$, and assume that $\deg\big(w,\mathcal{O}_2(\Omega)\big)$ is 
well-defined and nonzero and that $\hat x (0)\in\mathcal{O}_1(\Omega)$. Then, there exists a connected set $\Gamma$ 
of nontrivial $T$-triples for \eqref{eq:main} in $\Omega$ whose closure in $[0,\infty)\times C_T(\R^k\X M)$ meets 
the set
\[
    \big\{(0, \hat x, \cl{q})\in \Omega :   w(q) =0  \big\}
\]
and $\Gamma$ is not contained in any compact subset of $\Omega$.
In particular, if $M$ is a complete manifold and $\Omega = [0,\infty)\X C_T(\R^k\X M)$ then $\Gamma$ is
unbounded.
\end{theorem}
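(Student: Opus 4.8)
The plan is to reduce the study of $T$-periodic solutions of \eqref{eq:main} to a fixed-point problem for a suitable compact operator on $C_T(\R^k\times M)$ depending on the parameter $\lambda$, and then to invoke a global continuation principle (in the spirit of the Leray--Schauder / Mawhin continuation theorems adapted to manifolds, as in the references on fixed point index on manifolds). First I would rewrite, for $\lambda\geq 0$, a $T$-periodic solution $(x,y)$ of \eqref{eq:main} as a fixed point of the map $\Psi_\lambda$ that sends $(x,y)$ to the pair whose first component is the unique $T$-periodic solution of $\dot u = A(t)u + c(t) + \lambda f_1(t,x(t),y(t),\lambda)$ — which exists and is given by a formula analogous to \eqref{eq:solution} thanks to the non-$T$-resonance hypothesis \eqref{noTres} and Lemma \ref{lem:utility-result} — and whose second component is obtained by integrating $\dot v = \lambda f_2(t,x(t),y(t),\lambda)$, using that $f_2$ is tangent to $M$ so that $v$ stays on $M$ (here one uses a tubular-neighborhood retraction to handle the manifold constraint, or the intrinsic setup of degree theory on manifolds). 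The key point making the reduction work is that when $\lambda=0$ the first equation decouples and has the \emph{unique} $T$-periodic solution $\hat x$, while the second equation forces $y$ constant; hence the set of trivial $T$-triples in $\Omega$ is $\{(0,\hat x,\bar q): q\in\mathcal{O}_2(\Omega)\}$, naturally identified with $\mathcal{O}_2(\Omega)\subseteq M$.

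Next I would compute the fixed point index of $\Psi_0$ on a suitable neighborhood of $\{\hat x\}\times\mathcal{O}_2(\Omega)$. The first component contributes trivially (it is a constant map to the point $\hat x$), and the second component, after the standard reduction of the Poincar\'e operator at $\lambda=0$, is governed by the time-$T$ flow of the small vector field whose average is exactly $w$ from \eqref{mainaverage}; by the standard relation between the fixed point index of the Poincar\'e operator of an autonomous-like perturbation and the degree of the averaged vector field (the ``averaging principle'' for the fixed point index on manifolds), the index equals, up to sign, $\deg(w,\mathcal{O}_2(\Omega))$, which is assumed well-defined and nonzero. With a nonzero index at $\lambda=0$ in hand, I would then apply a global bifurcation / connectivity argument: either the classical Leray--Schauder continuation principle, or more precisely a version of Whyburn's lemma (as in the Fitzpatrick--Pejsachowicz--Rabier or Furi--Pera type statements), to produce a connected subset $\Gamma$ of the set of nontrivial $T$-triples in $\Omega$ whose closure meets $\{(0,\hat x,\bar q)\in\Omega : w(q)=0\}$ and which is not contained in any compact subset of $\Omega$. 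The final assertion — that $\Gamma$ is unbounded when $M$ is complete and $\Omega$ is the whole space — then follows because in that case $C_T(\R^k\times M)$ is a complete metric space (since $\R^k\times M$ is closed in $\R^{k+s}$, as noted in the preliminaries), so the only way a closed connected set can fail to lie in a compact set is to be unbounded.

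The main obstacle I anticipate is twofold. First, making the compactness and admissibility of the fixed-point reformulation rigorous on the \emph{open} set $\Omega\subseteq[0,\infty)\times C_T(\R^k\times M)$: one must produce an open set in the function space on whose boundary $\Psi_\lambda$ has no fixed points, and control the behavior near the ``finite-dimensional'' slices $\mathcal{O}_1(\Omega)$, $\mathcal{O}_2(\Omega)$; the hypothesis $\hat x(0)\in\mathcal{O}_1(\Omega)$ is precisely what guarantees the relevant piece of the trivial-solution set sits inside $\Omega$. Second, and more delicate, is the index computation at $\lambda=0$: the unperturbed system is genuinely nonautonomous in $x$, so the Poincar\'e operator is not a time-$T$ map of an autonomous flow, and one has to exploit the rigidity coming from the uniqueness of $\hat x$ to linearize/reduce the problem to the tangential average $w$. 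I expect this step — matching the fixed point index of the full Poincar\'e-type operator with $\deg(w,\mathcal{O}_2(\Omega))$ — to be where the bulk of the technical work (localization, excision, homotopy invariance of the index, and the averaging estimate) is concentrated.
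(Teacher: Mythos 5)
Your overall strategy (compute a fixed point index of a $\lambda$-parametrized operator, show it is nonzero at $\lambda=0$, then invoke a Whyburn/Furi--Pera connectivity lemma) matches the paper's in spirit, and you correctly identify the index computation at $\lambda=0$ as the technical core. But your route diverges from the paper's in a way that introduces real gaps.

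First, you propose to work directly in $C_T(\R^k\times M)$ with a Leray--Schauder-type compact integral operator $\Psi_\lambda$. The paper instead reduces the whole problem to the \emph{finite-dimensional} Poincar\'e $T$-translation operator $P_T^\lambda$ on $\R^k\times M$; it then proves a finite-dimensional version of the theorem (Theorem \ref{tdue}) for the set of ``starting triples'' $(\lambda,p,q)$ and transports the result to $T$-triples via the explicit homeomorphism $h(\lambda,x,y)=(\lambda,x(0),y(0))$ of Lemma \ref{lemmatuno}. This is not a cosmetic choice: fixed point index theory in $C_T(\R^k\times M)$ when $M$ is a manifold (so that the ambient function space is not a Banach space, only a metric space) is not something you can invoke off the shelf, whereas fixed point index for continuous maps on the finite-dimensional manifold $\R^k\times M$ is completely standard. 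You gesture at tubular neighborhoods, but this is exactly the kind of apparatus the paper is designed to avoid. So the gap is: your reduction to a compact operator on the function space is not set up, and is substantially harder to justify than the paper's Poincar\'e-map reduction.

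Second, you don't identify the actual mechanism by which the index is computed: the paper's key trick is the two-parameter homotopy \eqref{eq:two-param} which, keeping $\lambda>0$ fixed and small, continuously deforms the genuinely \emph{coupled} system \eqref{eq:main} into the \emph{decoupled} system \eqref{eq:disaccoppiato}, whose $T$-translation operator factors as a product $\mathcal{F}_T\times Q^\lambda_{\varphi,T}$. Only then does the index split (Proposition \ref{prop:1}) into $\caratt{U}(\hat x(0))\cdot\sign\det\Phi(T)$ (Lemma \ref{lem:first}) times $\deg(-w,V)$ via the manifold averaging principle (Theorem \ref{T.3.11}). Saying ``the first component contributes trivially'' and ``the second is governed by the averaged field'' skips the decoupling homotopy entirely, and it is precisely this step (Claim~1 in the proof of Theorem \ref{th:formulaind}, with the contradiction argument showing $w(q_0)=0$ on $\partial V$) that makes the computation legitimate for the coupled system at $\lambda>0$.

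Third, you say nothing about removing the local Lipschitz hypothesis. Theorem \ref{tuno} only assumes $f_1,f_2$ continuous, but your $\Psi_\lambda$ construction (and the paper's Poincar\'e map) both implicitly need uniqueness of Cauchy solutions. The paper devotes the second half of the proof of Theorem \ref{tuno} to a nontrivial approximation argument: choose smooth $f_1^i,f_2^i\to f_1,f_2$, obtain branches $\Gamma_i$ for each approximating system, show $\Gamma_i$ must hit the boundary of a carefully chosen $\mathcal{W}$ (using Ascoli--Arzel\`a and conditions \eqref{l1}--\eqref{l4}), and pass to the limit to contradict $\partial\mathcal{W}\cap\cl{\mathfrak{X}}=\emptyset$. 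Without this step your argument only covers the Lipschitz case and does not prove the stated theorem.

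In short: the skeleton (index $\neq 0$ at $\lambda=0$ plus a global connection lemma) is right, but the reduction to a compact operator in the function space is ill-posed as stated, the decoupling homotopy that makes the index computable is missing, and the approximation step needed to drop Lipschitz continuity is absent.
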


\medskip
We now illustrate the setting of Theorem \ref{tuno} with an explicit example.

\begin{example} Consider the following system of constrained ODEs in $\R^3$:
\begin{equation}\label{ex.2}
  \left\{\begin{array}{l}
      \dot x = -x+\sin(t)-\cos(t)+\lambda f(t,x,y,z),\\ 
      \dot y = \lambda z\big(y+x\sin(t)\big),\\
      \dot z = -\lambda y\big(y+x\sin(t)\big),\\
      y^2+z^2=1.
    \end{array}\right. 
\end{equation}
Where $f\colon\R\X\R^3\to\R$, continuous and $2\pi$-periodic in $t$, is given. System \eqref{ex.2} can be regarded as an ODE on $\R\X S^1$ (here $S^1$ lies in the plane $y,z$) because the time-dependent vector field defined by the second and third equations in \eqref{ex.2} is tangent to $S^1$. Taking $\hat x$ and $w$ as in Theorem~\ref{tuno} 
(with $T=2\pi$), we get $\hat x(t)=-\cos(t)$ and
\[
w(y,z) = \frac{1}{2\pi}\int_0^{2\pi}\begin{pmatrix}
  zy+z\sin(t)\hat x(t)\\
  -y^2-y\sin(t)\hat x(t)
\end{pmatrix}dt =\begin{pmatrix}
  zy\\
  -y^2
\end{pmatrix} .
\]
Clearly $w$ has only two zeros on $S^1$ given by $\mathbf{N}:=(0,1)$ and $\mathbf{S}:=(0,-1)$. 
Consider, for instance, $U_\mathbf{N}:=S_1\setminus\{\mathbf{S}\}$ and let
\[
 \Omega_\mathbf{N}:=[0,\infty)\X C_T(\R\X U_\mathbf{N}).
\]
It is not difficult to see that
\[
\deg\big(w,(\R\X S^1)\cap\Omega_\mathbf{N}\big)=\deg(w,U_\mathbf{N})=1.
\]
Hence, Theorem \ref{tuno} yields a connected set $\Gamma_\mathbf{N}$ of nontrivial $T$-triples for \eqref{ex.2} 
whose closure in $[0,\infty)\times C_T(\R^k\X M)$ is not contained in any compact subset of $\Omega_\mathbf{N}$
and  meets the set
\[
\big\{(0, \hat x, \cl{\mathbf{N}})\in [0,\infty)\X C_T(\R\X S^1) \big\}.
\]
\end{example}

%The previous example open the way to the study of a
%Starting from the previous example we can consider a 
%similar, but somehow
%more significant situation coming from a simple mechanical system.

The previous example opens the way to the study of a more significant
situation concerning the coupled differential equations describing 
a simple mechanical system, and the possible connections to a particular class
of Differential-Algebraic Equations (DAEs).

\begin{example}\label{expendolo}
Consider a block of mass $m$ constrained to a unitary circular rail lying on an horizontal plane. Put the origin of the plane in the center of the rail, and suppose the block can slide along the rail without friction and that it is acted upon by a continuous $T$-periodic force of the form $\phi(t,x,y)\left(\begin{smallmatrix}-y\\ x\end{smallmatrix}\right)$ (which is clearly tangent to the constraint). Let us denote by $r(t)$ the modulus of the centripetal force (the vincular reaction of the rail) exerted, at time $t$, on the block. One customary way of writing the equations governing such a system is through the formalism of Differential-Algebraic Equations (DAEs) see, e.g.\ \cite{dae},  as follows:
\begin{subequations}\label{coupled}
\begin{equation}\label{pendolacc}
  \left\{ \begin{array}{l}
            m\ddot x(t) = -r(t) x(t) -y\phi(t,x,y),\\
            m\ddot y(t) = -r(t) y(t) +x\phi(t,x,y),\\
            x(t)^2 + y(t)^2 = 1,
          \end{array}\right. \,\,\, \lambda >0,
\end{equation}
(See also, e.g., \cite[Example 4.7, \S 4]{Gerdin}.)
Consider now a linear oscillator of (fixed) unit mass and elastic constant $k$ subject to a $T$-periodic forcing $c(t)$, and assume, finally, that the acceleration $r(t)/m$ is used as imput by some nonspecified mechanism to influence this oscillator by the addition of a force $f\big(r(t)/m\big)$. Thus this oscillator is described by the following equation:
\begin{equation}\label{oscillacc}
 \ddot z(t) = -k z(t) + c(t) + f\big(r(t)/m\big).
\end{equation}
\end{subequations}
The system under consideration is schematized in figure \ref{fig:expendolo}.

%%%%figurina%%%%
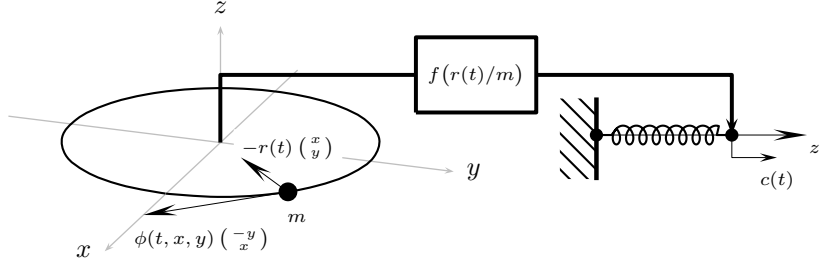
\begin{figure}[h!]
\begin{center}
  \begin{pspicture}(-4.8,-1.45)(6,1.8)
    \psset{unit=1cm}
    %\psgrid
    \put(-2,0){%\psgrid%
    \psset{unit=1.5cm, Alpha=70,Beta=20}
    \pstThreeDCoor[xMin=-2,xMax=3,yMin=-2,yMax=2.2,zMin=0,zMax=1.1,linecolor=lightgray]
    %rail
    \parametricplotThreeD[fillstyle=none,xPlotpoints=20,linecolor=black,linewidth=0.8pt,plotstyle=curve,algebraic](0,6.28){1.41*cos(t) , 1.41*sin(t), 0}
    %mass
    \pstThreeDDot[linewidth=0.06](1,1,0)
    \pstThreeDLine[arrowsize=3pt 5,arrowlength=3,linewidth=0.25pt]{*->}(1,1,0)(2,0,0)
    \pstThreeDPut(1.6,1.3,0){\scriptsize $m$}
    \pstThreeDPut(2.4,0.7,0){\scriptsize $\phi(t,x,y)\left(\begin{smallmatrix}-y\\ x\end{smallmatrix}\right)$}
    \rput*(0.6,-0.05){\scriptsize $-r(t)\left(\begin{smallmatrix}x\\ y\end{smallmatrix}\right)$}
    \pstThreeDLine[arrowsize=3pt 5,arrowlength=3,linewidth=0.25pt]{*->}(1,1,0)(0.3,0.3,0)
    }
    \psline[linewidth=1.5pt](-2,0)(-2,0.9)(0.6,0.9)
    \psline[linewidth=1.5pt](0.6,0.4)(0.6,1.4)(2.2,1.4)(2.2,0.4)(0.6,0.4)
    \rput(1.4,0.9){\scriptsize $f\big(r(t)/m\big)$}
    \psline[linewidth=1.5pt]{->}(2.2,0.9)(4.8,0.9)(4.8,0.1)
    %oscillator
    \psline[arrowsize=3pt 5,arrowlength=3,linewidth=0.25pt]{->}(3,0.1)(5.8,0.1)
    \pspolygon[linecolor=white,fillstyle=vlines](3,-0.5)(3,0.6)(2.5,0.6)(2.5,-0.5)
    \psline[linewidth=1.7pt](3,-0.5)(3,0.6)
    \uput[dr](5.7,0.1){\scriptsize $z$}
    \psdot[linewidth=1.5pt](3,0.1)
    \psdot[linewidth=1.5pt](4.8,0.1)
    \pscoil[coilarm=.2cm,linewidth=0.8pt,coilwidth=.25cm]{*-*}(3,0.1)(4.8,0.1)
    \psline[arrowsize=2pt 5,arrowlength=2,linewidth=0.25pt]{->}(4.8,0.1)(4.8,-0.2)(5.4,-0.2)
    \uput[d](5.4,-0.2){\scriptsize $c(t)$}
  \end{pspicture}
\end{center}
\caption{The system of Example \ref{expendolo}}\label{fig:expendolo}
\end{figure}

We are interested in the set of $T$-periodic solutions of the coupled system \eqref{coupled} as $m>0$ varies. 

Setting $\lambda^2=1/m$, $\zeta=\dot z$, $\dot x=\lambda u$, $\dot y=\lambda v$, and assuming that $f$ is homogeneous of degree $\alpha$, the equations governing \eqref{coupled} become:
\begin{equation} \label{eq:DAE-rew}
  \left\{ \begin{array}{l}
            \dot z(t) = \zeta (t),\\
            \dot \zeta(t) = - z(t) + c(t) + \lambda^{2\alpha} f\big(r(t)\big),\\
            \dot x(t) = \lambda u(t),\\
            \dot y(t) = \lambda v(t), \\
            \dot u(t) = \lambda \big( -r(t) x(t) -y(t)\phi(t,x,y)\big),\\
            \dot v(t) = \lambda \big( -r(t) y(t) +x(t)\phi(t,x,y)\big),\\
            x^2(t) + y^2(t) = 1.
          \end{array}\right. \,\,\, \lambda >0,
    \end{equation}
%, 
System \eqref{eq:DAE-rew} is actually a DAE with differentiation index 2 (see \cite{dae}). Differentiating with respect to $t$ the ``algebraic'' constraint $x(t)^2 + y(t)^2=1$, we get $2\dot x(t)x(t)+2\dot y(t)y(t)=0$. Using \eqref{eq:DAE-rew} and dividing by $2\lambda>0$, we get $x(t)u(t)+y(t)v(t)=0$. Differentiationg again and using the other relations in \eqref{eq:DAE-rew} we get that any solution $t\mapsto\big(x(t),y(t),u(t),v(t),r(t),z(t),\zeta(t)\big)$ of \eqref{eq:DAE-rew} satisfies
\[
 u(t)^2+v(t)^2-r(t)\big(x(t)^2 + y(t)^2\big)=0.
\]
Whence,
\[
 \dot r(t)= 2\big(\dot u(t)u(t)+\dot v(t)v(t)\big).
\]
Thus, we have a differential equation for $r$: 
\[
 \dot r(t)= -2\lambda\Big(\big( r x +y\phi(t,x,y)\big)u+   \big(r y -x\phi(t,x,y)\big)v\Big).
\]
Pluggiung this differential equation in \eqref{eq:DAE-rew} we get an equivalent equation on $\R^2\X M\subseteq\R^7$, given by:
\begin{equation} \label{DAEManifold}
  \left\{ \begin{array}{l}
            \dot z(t) = \zeta (t),\\
            \dot \zeta(t) = - z(t) + c(t) + \lambda^{2\alpha} f\big(r(t)\big),\\
            \dot x(t) = \lambda u(t),\\
            \dot y(t) = \lambda v(t), \\
            \dot u(t) = \lambda \big( -r(t) x(t) -y(t)\phi(t,x,y)\big),\\
            \dot v(t) = \lambda \big( -r(t) y(t) +x(t)\phi(t,x,y)\big),\\
            \dot r(t)= -2\lambda\Big(\big( r x +y\phi(t,x,y)\big)u+   \big(r y -x\phi(t,x,y)\big)v\Big),
          \end{array}\right. \,\,\, \lambda >0,
    \end{equation}
where $M$ is the $2$-dimensional submanifold of $\R^5$ defined by the constraints obtained above, namely:
 \[
  M:=\Big\{(X,Y,U,V,R)\in\R^5: X^2+Y^2=1,\, XU+YV=0,\, U^2+V^2-R=0 \big\},
 \]
here we use capital letters to denote the coordinates in $\R^5$ in order to distinguish from the dependent variables in \eqref{eq:DAE-rew} and \eqref{DAEManifold}.
    
Equation \eqref{DAEManifold} can be extended to the case $\lambda\in [0,\infty)$, although its physical meaning is lost when $\lambda=0$. However, Theorem \ref{tuno} can be used to investigate the limiting behaviour of the system \eqref{coupled} and to gain insights into the topological structure of its $T$-periodic solutions.

Finally, note that \eqref{eq:DAE-rew}, after index reduction, falls within the framework considered in \cite{SpaDAE, BisDAE} (see also \cite{Bi-Spa-TMNA, Bi-Spa-EJQTDE} for the case of similar DAEs involving delay). 
\end{example}

\section{Fixed point index and Poincar\'e translation operator} \label{sec:poincare}
 Throughout this section we will assume that $f_1$ and $f_2$ of equation 
\eqref{eq:main} are locally Lipschitz so that the initial value problem 
\begin{equation} \label{eq:cauchy} 
  \left\{ \begin{array}{l}
     \dot x = {A}(t)x + c(t) + \lambda f_1(t, x, y,\lambda), \\
     \dot y = \lambda f_2(t, x, y,\lambda),\\
     (x(0), y(0))= (p, q),
          \end{array} \right.
\end{equation}
admits a unique solution for $\lambda \geq 0$. 

We denote by $P^\lambda_\tau (p,q)$ the value of the solution of \eqref{eq:cauchy} at time $\tau$, when 
defined:  That is, $P_\tau^\lambda$ is the so-called Poincar\'e $\tau$-translation operator associated 
with \eqref{eq:cauchy}.  Well-known properties of differential equations show that the domain
\begin{equation}
\begin{split}\label{defD}
  \mathcal{D}=\Big\{ (\ell, p,q) \in [0,\infty)\X\R^k\X M : &\text{ the solution of \eqref{eq:cauchy} 
  for $\lambda=\ell$}\\
   &\qquad\quad\text{is continuable up to $t=T$}  \Big\}
\end{split}
\end{equation}
of the map $(\lambda,p,q)\mapsto P^\lambda_T(p,q)$, is open in $[0,\infty)\X \R^k\X M$. Consequently,
fixed any $\lambda\in [0,\infty)$, the $\lambda$-slice of $\mathcal{D}$
\begin{equation*}
 \mathcal{D}_\lambda = \Big\{(p,q)\in\R^k\X M : (\lambda,p,q)\in \mathcal{D}\Big\},
\end{equation*} 
is the domain of the map $(p,q)\mapsto P^\lambda_T(p,q)$, is open in $\R^k\X M$.

As the functions $A$, $c$, $f_1$ and $f_2$ are $T$-periodic in $t$, it is well-known 
that $T$-periodic solutions of \eqref{eq:main} (for any given $\lambda\geq 0$) correspond 
bijectively to fixed points of the $T$-translation operator $P_T^\lambda$, in the sense 
that an initial condition $(p,q)$ of \eqref{eq:cauchy} yields a $T$-periodic solution 
if and only if $(p,q)$ is a fixed point of $P_T^\lambda$.  
\smallskip

Since our purpose is to study the $T$-periodic solutions of \eqref{eq:main}, it makes 
sense to investigate the fixed point index of the $T$-translation operator $P_T^\lambda$.
The following theorem gives a useful formula; it is a major step towards Theorem \ref{tuno}
and the main result of this section.

\begin{theorem}\label{th:formulaind}
  Let $U\X V\subseteq \R^k\X M$ be an open subset such that $\hat
  x(0)\notin \D U$ and $w^{-1}(0)\cap \D V=\emptyset$ where $w$ is
  defined in \eqref{mainaverage}. Let $P_T^\lambda$ be the
  $T$-translation operator associated with \eqref{eq:main}. Then, for
  sufficiently small values of $\lambda>0$
  \begin{equation} \label{eq:disaccoppind} \left|\ind
      \big( P_T^\lambda, U\X V\big) \right| =\caratt{U}(\hat x (0))
    \big|\deg \big( w, V\big)\big|.
  \end{equation}
where $\caratt{U}$ denotes the characteristic function of $U$.
\end{theorem}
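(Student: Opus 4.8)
The plan is to reduce the fixed point index of $P_T^\lambda$ on the product $U\X V$ to a product of two separate computations, one in the $\R^k$-direction and one in the $M$-direction, by exploiting the block structure of system \eqref{eq:main}: for $\lambda=0$ the first equation decouples from the second and has the unique $T$-periodic solution $\hat x$, while the second equation becomes $\dot y=0$. First I would write $P_T^\lambda(p,q)=\big(P_T^{1,\lambda}(p,q),P_T^{2,\lambda}(p,q)\big)$ according to the splitting $\R^k\X M$, and observe that for $\lambda=0$ one has $P_T^{1,0}(p,q)=\Phi(T)p+\Phi(T)\int_0^T\Phi(s)^{-1}c(s)\,ds$, which is independent of $q$ and is an affine map with $I-\Phi(T)$ invertible by \eqref{noTres}; hence $P_T^{1,0}$ has the single fixed point $\hat x(0)$ in all of $\R^k$ and, since $\hat x(0)\notin\partial U$, the index $\ind(P_T^{1,0},U)$ equals $\caratt{U}(\hat x(0))\,\sign\det\big(I-\Phi(T)\big)$, a nonzero integer up to sign. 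The goal is to show that for small $\lambda>0$ the index on $U\X V$ equals the product of (something cohomologous to) this factor with $\big|\deg(w,V)\big|$.

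The core of the argument is an averaging/rescaling step for the $M$-component. On the set where the first component is near $\hat x$, the $y$-equation $\dot y=\lambda f_2(t,x,y,\lambda)$ is a slow equation; rescaling time by $t=\tau/\lambda$ (or, equivalently, invoking the standard first-order averaging principle as $\lambda\to 0^+$) shows that the $T$-translation map $P_T^{2,\lambda}$ restricted to initial data with $x$-component forced to stay close to $\hat x$ behaves, to leading order, like the time-$\lambda T$ flow of the autonomous averaged field $w(q)=\frac1T\int_0^T f_2(t,\hat x(t),q,0)\,dt$. The key step will then be to show that the fixed point index of $q\mapsto P_T^{2,\lambda}(\hat x(0),q)$ on $V$ equals $\ind(\text{id}-\lambda T\,w(\cdot)+o(\lambda),V)$, which for small $\lambda>0$ is (up to sign) $\deg(w,V)$ by the standard relationship between the fixed point index of the Poincaré map of $\dot q=w(q)$ and the Euler characteristic / degree of the generating vector field on a manifold (see \cite{difftop,milnor}); the hypothesis $w^{-1}(0)\cap\partial V=\emptyset$ guarantees admissibility of this degree. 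Combining the two via the multiplicativity of the fixed point index on products — carefully handled because the system is only triangular, not a genuine product, so one uses a homotopy that freezes the $x$-feedback in $f_2$ at $\hat x$, justified by continuous dependence and the contraction estimate coming from the invertibility of $I-\Phi(T)$ — yields \eqref{eq:disaccoppind}.

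More concretely, the steps in order: (1) set up the splitting and record $P_T^{1,0}$ explicitly from \eqref{linsol}; (2) use \eqref{noTres} and a homotopy in $\lambda$ together with the excision/normalization properties of the fixed point index to localize all $T$-periodic solutions with $x$-component in $U$ near $\hat x$ for small $\lambda$, and to compute $\ind(P_T^{1,\lambda},U)=\caratt{U}(\hat x(0))\,\sign\det(I-\Phi(T))$; (3) on that neighborhood, replace $x$ by $\hat x$ inside $f_2$ via an admissible homotopy, reducing the $y$-component to the Poincaré map of $\dot y=\lambda f_2(t,\hat x(t),y,\lambda)$; (4) apply the averaging principle and the flow–vector-field index identity to get $\big|\ind(P_T^{2,\lambda},V)\big|=\big|\deg(w,V)\big|$ for small $\lambda>0$; (5) multiply, using the product/triangular structure and that $\big|\sign\det(I-\Phi(T))\big|=1$, to obtain the displayed formula.

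I expect the main obstacle to be step (4) combined with the triangular (not product) coupling in step (3): making rigorous, on a manifold $M$ embedded in $\R^s$ and with the perturbation $f_2$ depending on the $x$-history, that the $T$-translation operator's fixed point index on $V$ is governed by $\deg(w,V)$ uniformly as $\lambda\to 0^+$. This requires a careful averaging estimate on $M$ (e.g. in local charts or via a tubular neighborhood of $M$ in $\R^s$, using that $f_2$ is tangent to $M$), control of the error terms uniformly on $\cl V$, and an argument that the $x$-component genuinely stays in a small tube around $\hat x$ so that the substitution $x\rightsquigarrow\hat x$ in $f_2$ is a legitimate admissible homotopy of the fixed-point problem on $U\X V$.
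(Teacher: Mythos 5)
Your overall strategy coincides with the paper's: decouple via a homotopy that freezes the $x$-dependence of $f_2$ at $\hat x$, compute the $\R^k$-factor index from Lemma~\ref{lem:first}, compute the $M$-factor index from an averaging result for the slow equation $\dot y=\lambda\varphi(t,y,\lambda)$, and multiply via Proposition~\ref{prop:1}. What the paper packages as Theorem~\ref{T.3.11} (the Furi--Pera--Spadini formula $\ind(Q^\lambda_{\varphi,T},V)=\deg(-\mathrm{w}_\varphi,V)$ for small $\lambda>0$) is exactly what you describe informally in step~(4) as ``averaging plus the flow--vector-field index identity''; that part of your sketch is fine in spirit, though the rigorous version you would need is precisely that cited result rather than the identity for an autonomous flow.

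Where your proposal has a genuine soft spot is the justification of admissibility of the freezing homotopy. You attribute it to ``continuous dependence and the contraction estimate coming from the invertibility of $I-\Phi(T)$.'' That mechanism controls only the $x$-component: invertibility of $I-\Phi(T)$ keeps the $x$-part of any $T$-periodic orbit near $\hat x$ and hence away from $\partial U$. It does nothing to prevent the $y$-component of a fixed point of the homotopy from creeping onto $\partial V$ as $\lambda\to 0^+$ and $\mu$ varies. Ruling that out requires precisely the hypothesis $w^{-1}(0)\cap\partial V=\emptyset$: the paper's Claim~1 takes a hypothetical sequence of boundary fixed points with $\lambda_n\to 0$, passes to the limit using continuous dependence, divides $y_n(T)-y_n(0)=0$ by $\lambda_n$, and deduces $w(q_0)=0$ for some $q_0\in\partial V$, a contradiction. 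You do invoke $w^{-1}(0)\cap\partial V=\emptyset$ for admissibility of the degree $\deg(w,V)$, but in this proof its first and essential role is to make the homotopy admissible; without making that point explicit, your step~(3) is incomplete. Once you replace the contraction-estimate rationale with this $w\neq 0$ argument, your outline matches the paper's proof.
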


The proof of this theorem requires some preliminary steps. Let us begin with a simple formula for 
the fixed point index of the $T$-translation operator associated to equation \eqref{eq:linear-ODE}.
More precisely, let $\mathcal{F}_T(p)\in\R^k$ be the value, at time $t=T$, of the maximal 
solution of \eqref{eq:linear-ODE} that satisfies $x(0)=p$. Since $A$ and $c$ are bounded functions 
(they are assumed $T$-periodic), it is well known that all solutions of \eqref{eq:linear-ODE} are
continuable in $\R$; that is, the domain of the function $\mathcal{F}_T$ is the whole $\R^k$.

The formula we need is the content of the following preliminary lemma:

\begin{lemma} \label{lem:first} 
Let $\mathcal{F}_T$ be as above, and let $U\subseteq \R^k$ be an open and such that $\hat x(0)\notin\D U$, 
then
\begin{equation}
    \left|\ind \big(\mathcal{F}_T, U\big)\right| = \caratt{U}(\hat x (0)).
\end{equation}
where, as in Theorem \ref{th:formulaind}, $\caratt{U}$ denotes the characteristic function of $U$.
\end{lemma}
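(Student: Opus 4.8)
The plan is to reduce the computation to a finite-dimensional statement about the affine map $\mathcal{F}_T$ and then invoke the standard relationship between the fixed point index of an affine (more generally, $C^1$) map and the local degree of $I-D\mathcal{F}_T$ at an isolated fixed point. By Lemma \ref{lem:utility-result}, the non-$T$-resonance assumption \eqref{noTres} guarantees that $\mathcal{F}_T$ has exactly one fixed point, namely $\hat x(0)$: indeed $\mathcal{F}_T(p)=\Phi(T)p+\Phi(T)\int_0^T\Phi(s)^{-1}c(s)\,ds$, so $p$ is fixed iff $(I-\Phi(T))p=\Phi(T)\int_0^T\Phi(s)^{-1}c(s)\,ds$, which has the unique solution $p=\hat x(0)$.

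First I would dispose of the easy case. If $\hat x(0)\notin U$, then since $\hat x(0)\notin\partial U$ we have $\hat x(0)\notin\overline U$, so $\mathcal{F}_T$ has no fixed point in the (closed) set $\overline U$; by the solution property of the fixed point index, $\ind(\mathcal{F}_T,U)=0=\caratt{U}(\hat x(0))$, and we are done.

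Next, the main case $\hat x(0)\in U$. Here $\hat x(0)$ is the unique fixed point of $\mathcal{F}_T$ in $U$, so by excision $\ind(\mathcal{F}_T,U)=\ind(\mathcal{F}_T,B)$ for any small ball $B$ around $\hat x(0)$ contained in $U$. Now $\mathcal{F}_T$ is an affine map with linear part $\Phi(T)$; the standard formula for the index of a map at a fixed point where $I$ minus the derivative is invertible gives
\[
 \ind(\mathcal{F}_T,B)=\sign\det\big(I-\Phi(T)\big),
\]
which is well-defined and nonzero precisely because of \eqref{noTres}. Taking absolute values yields $\left|\ind(\mathcal{F}_T,U)\right|=1=\caratt{U}(\hat x(0))$, as claimed. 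I would also remark that $\mathcal{F}_T$ can be written as a compact (here even finite-rank, being affine on $\R^k$) perturbation of the identity, or simply note that in finite dimension the fixed point index coincides with the Brouwer degree of $I-\mathcal{F}_T$ on $U$, so that $\ind(\mathcal{F}_T,U)=\deg(I-\mathcal{F}_T,U,0)=\deg(p\mapsto(I-\Phi(T))(p-\hat x(0)),U,0)=\sign\det(I-\Phi(T))$ by the linearization/homotopy invariance of Brouwer degree.

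There is no serious obstacle here; the only point requiring a little care is making sure the index is genuinely well-defined, i.e.\ that $\mathcal{F}_T$ is admissible on $U$ (this needs $\hat x(0)\notin\partial U$, which is the hypothesis) and that $\mathcal{F}_T$ falls within the class of maps for which the index is defined on subsets of $\R^k$ — which is automatic since $\R^k$ is a finite-dimensional manifold. The substantive input is entirely Lemma \ref{lem:utility-result} plus the invertibility of $I-\Phi(T)$; everything else is a routine application of the excision and normalization properties of the fixed point index together with the linearization formula.
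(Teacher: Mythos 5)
Your argument is correct and follows the same route as the paper: both reduce to the unique fixed point $\hat x(0)$ of the affine map $\mathcal{F}_T$, dispose of the case $\hat x(0)\notin U$ via the solution property, and compute the index at $\hat x(0)$ by linearization. One small remark: you correctly invoke $\sign\det\bigl(I-\Phi(T)\bigr)$, whereas the paper's proof writes $\sign\det\Phi(T)$ --- evidently a slip (the relevant hypothesis, per \eqref{noTres}, is invertibility of $I-\Phi(T)$, not of $\Phi(T)$), though it is harmless for the conclusion since only the absolute value, equal to $1$ in either case, is used.
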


\begin{proof}
  Let $\Phi$ be as in Lemma \ref{lem:utility-result}.  Then, by \eqref{linsol},
  \begin{equation*}
    \mathcal{F}_T(p) = \Phi(T)p + \Phi(T)\int_0^T\Phi(s)^{-1}c(s)ds,
  \end{equation*}
  for all $p\in\R^k$. Taking the Fr\'echet derivative of $\mathcal{F}_T$ at
  $p_0:=\hat x(0)$, we get
  \begin{equation*}
    \mathcal{F}'_T(p_0)h = \Phi(T), \,\, \forall h\in T_{p_0}\R^k\cong \R^k.
  \end{equation*}
  Hence, since $\Phi(T)$ is nonsingular, by the properties of the fixed point index we have that
  \begin{equation*}
    \ind (\mathcal{F}_T, U)= \left\{\begin{array}{ll} \sign \big(\det \Phi(T)\big),
        & \text{ if } p_0\in U,\\
        0, & \text{ if } p_0\notin U.\\ 
      \end{array} \right.
  \end{equation*}
The assertion follows.
\end{proof}

We will need a slight extension of a result of \cite{fupespa} that concerns the translation 
operator associated to equations of the following form:
\begin{equation} \label{eq:eq-comodo} 
\dot y = \lambda \varphi(t,y,\lambda),\quad \lambda \geq 0,
\end{equation}
where $\varphi\colon\R\X N\X [0,\infty)\to\R^n$ is a $C^1$ map, tangent to a manifold $N\subseteq\R^n$ 
in the sense that $\varphi(t,q,\lambda)\in T_qN$ for all $(t,q,\lambda)\in\R\X N\X [0,\infty)$, which is 
$T$-periodic in the first variable. 

We will denote by $Q_{\varphi,T}^\lambda$ the $T$-translation operator associated with \eqref{eq:eq-comodo}. 
Namely, $Q_{\varphi,T}^\lambda(q)$, when defined, will be the value for $t=T$ of the (unique) solution of 
\eqref{eq:eq-comodo} satisfying $y(0)=q$. As discussed above for the case of $P_T^\lambda$ one can prove 
that the domainn of the map $(\lambda,q)\mapsto Q_{\varphi,T}^\lambda(q)$ is open in $[0,\infty)\X N$. We 
also define the average vector field
\begin{equation}\label{avgVF}
  \mathrm{w}_\varphi (q) =\frac{1}{T} \int_0^T \varphi(t, q, 0)\, dt,\qquad q\in N,
\end{equation}
which is clearly tangent to $N$.

\begin{theorem}\label{T.3.11}
Let $f$, $\mathrm{w}_\varphi$ and $N$ be as above. Let $U$ be a relatively compact open subset of $N$ and assume 
that $(\mathrm{w}_\varphi,U)$ is admissible for the degree.  Then, there exists $\lambda_0>0$ such that, for 
$0<\lambda\leq\lambda_0$, the $T$-translation operator $Q_{\varphi,T}^\lambda$ associated with \eqref{eq:eq-comodo} 
is defined on $U$, fixed point free on $\D U$ and
\begin{equation*}
    \ind \big( Q^\lambda_{\varphi,T},U\big) = \deg(-\mathrm{w}_\varphi ,U).
\end{equation*}
\end{theorem}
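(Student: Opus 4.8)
The plan is to reduce the statement to the corresponding result in \cite{fupespa}, of which this is a slight extension: the only difference is that here we allow the perturbing term to depend on the parameter $\lambda$, whereas the cited result is (presumably) stated for $\dot y=\lambda\varphi(t,y)$ with $\varphi$ independent of $\lambda$. First I would record the standard facts about $Q^\lambda_{\varphi,T}$: its domain is open in $[0,\infty)\X N$, and since $U$ is relatively compact in $N$ and $Q^0_{\varphi,T}=\id_N$ on $U$, a compactness argument on the open domain guarantees that there is $\lambda_0>0$ such that $Q^\lambda_{\varphi,T}$ is defined on a neighbourhood of $\cl U$ for every $\lambda\in[0,\lambda_0]$. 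This gives the first assertion.

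Next I would establish that, after possibly shrinking $\lambda_0$, $Q^\lambda_{\varphi,T}$ is fixed-point free on $\D U$ for $0<\lambda\le\lambda_0$. Suppose not: then there are sequences $\lambda_n\downarrow 0$ and $q_n\in\D U$ with $Q^{\lambda_n}_{\varphi,T}(q_n)=q_n$. By relative compactness we may assume $q_n\to q_*\in\D U$. Writing the solution $y_n$ of \eqref{eq:eq-comodo} with $y_n(0)=q_n$ and parameter $\lambda_n$, the periodicity condition reads $\int_0^T\varphi(t,y_n(t),\lambda_n)\,dt=0$. Since $\dot y_n=\lambda_n\varphi(t,y_n,\lambda_n)$ and $\varphi$ is bounded on the compact set $[0,T]\X\cl U\X[0,\lambda_0]$, we get $y_n\to \bar q_*$ uniformly on $[0,T]$; passing to the limit in the integral and using continuity of $\varphi$ yields $\int_0^T\varphi(t,q_*,0)\,dt=0$, i.e.\ $\mathrm{w}_\varphi(q_*)=0$ with $q_*\in\D U$, contradicting admissibility of $(\mathrm{w}_\varphi,U)$ for the degree. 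Hence $\ind(Q^\lambda_{\varphi,T},U)$ is well-defined for $0<\lambda\le\lambda_0$.

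For the index formula, I would use the homotopy invariance of the fixed point index together with the rescaled-time / averaging idea underlying \cite{fupespa}. The map $(\lambda,q)\mapsto Q^\lambda_{\varphi,T}(q)$ extends continuously to $\lambda=0$ with $Q^0_{\varphi,T}=\id$, which carries no index information directly; instead one considers, for $\lambda>0$, the normalized displacement $q\mapsto \tfrac{1}{\lambda}\big(q-Q^\lambda_{\varphi,T}(q)\big)$, shows via Gronwall-type estimates that it converges, uniformly on $\cl U$ as $\lambda\to 0^+$, to $\mathrm{w}_\varphi$ (the first-order term in $\lambda$ of the solution map is exactly $\int_0^T\varphi(t,q,0)\,dt$, because $y(t;\lambda,q)=q+\lambda\int_0^t\varphi(s,q,0)\,ds+o(\lambda)$ uniformly, the $\lambda$-dependence of $\varphi$ contributing only to the $o(\lambda)$ term by continuity). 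Since a fixed point of $Q^\lambda_{\varphi,T}$ is a zero of this normalized displacement, and both are free of such points on $\D U$, a straight-line homotopy between $\tfrac1\lambda(\id-Q^\lambda_{\varphi,T})$ and $\mathrm{w}_\varphi$ on $\cl U$ is admissible for small $\lambda$, whence by the relation between the fixed point index of a self-map and the degree of its displacement on manifolds one obtains
\[
\ind\big(Q^\lambda_{\varphi,T},U\big)=\deg\big(\id-Q^\lambda_{\varphi,T},U\big)=\deg\big(\mathrm{w}_\varphi,U\big)\cdot(-1)^{?}\, ,
\]
and a careful bookkeeping of orientation (the displacement of a translation operator near the identity corresponds, up to the standard sign, to $-\mathrm{w}_\varphi$) gives the stated $\ind(Q^\lambda_{\varphi,T},U)=\deg(-\mathrm{w}_\varphi,U)$.

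The main obstacle is the orientation/sign bookkeeping in the last step: making the passage from "fixed point index of $Q^\lambda_{\varphi,T}$" to "degree of $-\mathrm{w}_\varphi$" rigorous on a manifold $N$ that is not an open subset of Euclidean space requires working in local charts (or embedding $N$ in $\R^n$ and using a tubular neighbourhood retraction) and tracking how the degree of a tangent vector field relates to the fixed point index of the associated flow-like map. Since this is precisely the content of the result in \cite{fupespa}, the cleanest route is to invoke it verbatim for the $\lambda$-independent field $\tilde\varphi(t,q):=\varphi(t,q,0)$, whose averaged field is $\mathrm{w}_\varphi$, and then absorb the genuine $\lambda$-dependence of $\varphi$ by a final homotopy: for $s\in[0,1]$ the field $(t,q,\lambda)\mapsto\varphi(t,q,s\lambda)$ interpolates between $\tilde\varphi$ ($s=0$) and $\varphi$ ($s=1$), all sharing the same average $\mathrm{w}_\varphi$ at $\lambda=0$, so the corresponding translation operators are fixed-point-free on $\D U$ uniformly in $s$ for $\lambda$ small (by the same compactness argument as above) and the index is independent of $s$. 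This confines all the delicate orientation analysis to the already-established case and makes the extension routine.
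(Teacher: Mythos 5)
Your proposal is correct and, in spirit, matches the paper's proof --- which is in fact a single sentence: ``The same argument of the proof of Theorem 3.11 in \cite{fupespa} applies to this more general assertion with only minimal adaptation.'' You supply what that sentence leaves implicit, and your final paragraph gives a particularly clean realization of the ``minimal adaptation'': rather than re-running the proof of the cited theorem with $\lambda$-dependence threaded through, you invoke its \emph{statement} for the $\lambda$-independent field $\tilde\varphi(t,q)=\varphi(t,q,0)$ and then absorb the genuine $\lambda$-dependence by an admissible homotopy $s\mapsto Q^\lambda_{\varphi_s,T}$ with $\varphi_s(t,q,\lambda)=\varphi(t,q,s\lambda)$, using a compactness argument uniform in $s\in[0,1]$ to get a single threshold $\lambda_0$. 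That reduction isolates all orientation bookkeeping inside the black box, which is arguably tidier than what the paper's one-liner suggests. The preliminary steps (openness of the domain, uniform existence on a neighborhood of $\cl U$ for small $\lambda$, fixed-point freeness on $\D U$ via the contradiction $\int_0^T\varphi(t,q_*,0)\,dt=0$) are exactly the right ingredients.

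One small slip worth fixing in the paragraph on the normalized displacement: from your own expansion $Q^\lambda_{\varphi,T}(q)=q+\lambda\int_0^T\varphi(s,q,0)\,ds+o(\lambda)$, the quantity $\tfrac{1}{\lambda}\big(q-Q^\lambda_{\varphi,T}(q)\big)$ converges uniformly to $-T\,\mathrm{w}_\varphi$, not to $\mathrm{w}_\varphi$. Since the degree is unchanged under multiplication by the positive constant $T$, what the straight-line homotopy actually shows is $\deg\big(\tfrac1\lambda(\id-Q^\lambda_{\varphi,T}),U\big)=\deg(-\mathrm{w}_\varphi,U)$, which is precisely the stated formula; no sign flip by ``careful bookkeeping'' is needed at that point (it has already occurred in the displacement). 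You hedge with the $(-1)^?$, and your final reduction to \cite{fupespa} sidesteps the issue entirely, so this does not affect the correctness of the argument, but the intermediate claim as written is off by a sign and a factor of $T$.
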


\begin{proof}
 The same argument of the proof of Theorem 3.11 in \cite{fupespa} applies to this more general assertion with 
 only minimal adaptation. 
\end{proof}

The following fact is a direct consequence of the properties of the fixed point index of tangent
vector fields on manifolds. Its simple proof is left to the reader.

\begin{proposition} \label{prop:1} 
Let $X$ and $Y$ be differentiable manifolds and let
Let $F_1\colon A\subseteq X\to Y$ and $F_2\colon B\subseteq Y\to Y$ be continuous maps. Let $U\subseteq A$ 
and $V\subseteq B$ be open subsets and assume, $F_1$ and $F_2$ admissible for the fixed point index in $U$ 
and $V$, respectively. Then $\big(F_1 \X F_2\big)(x, y) := (F_1(x), F_2(y))$ is admissible for the fixed 
point index in $U\X V$ and
\begin{equation} \label{eq:indice-disaccoppiato} \ind \big( F_1\X
  F_2, U\X V\big) =\ind \big( F_1, U\big) \ind\big( F_2, V\big).
\end{equation}
\end{proposition}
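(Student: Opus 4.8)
The plan is to deduce the product formula \eqref{eq:indice-disaccoppiato} from the well‑known multiplicativity (Cartesian‑product property) of the fixed point index for self‑maps of open subsets of a Euclidean space, via the standard construction of the fixed point index on a differentiable manifold.

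First I would check that $F_1\X F_2$ is admissible for the fixed point index on $U\X V$. Since $\mathrm{Fix}(F_1\X F_2)=\mathrm{Fix}(F_1)\X\mathrm{Fix}(F_2)$, intersecting with the closure $\overline{U}\X\overline{V}$ of $U\X V$ yields $\big(\mathrm{Fix}(F_1)\cap\overline{U}\big)\X\big(\mathrm{Fix}(F_2)\cap\overline{V}\big)$, which is compact, being a product of compact sets. Moreover, no fixed point of $F_1\X F_2$ can lie on $\D(U\X V)=(\D U\X\overline{V})\cup(\overline{U}\X\D V)$: such a point would project to a fixed point of $F_1$ on $\D U$ or of $F_2$ on $\D V$, against the admissibility of the factors. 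Hence $\mathrm{Fix}(F_1\X F_2)\cap\overline{U\X V}$ is a compact subset of $U\X V$, which is precisely what admissibility requires.

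Next, recall that the fixed point index of a map on a differentiable manifold $X\subseteq\R^s$ is defined, following the standard procedure (see also the texts cited in the Introduction), by embedding $X$ as a submanifold of some $\R^N$, choosing an open tubular neighbourhood $W_X\supseteq X$ in $\R^N$ together with a smooth retraction $r_X\colon W_X\to X$, and setting $\ind(F,U):=\ind_{\R^N}\!\big(i_X\circ F\circ r_X,\;r_X^{-1}(U)\big)$, where $i_X\colon X\hookrightarrow\R^N$ is the inclusion and $\ind_{\R^N}$ the ordinary fixed point index in $\R^N$; this is independent of all the choices. Now, if $X\subseteq\R^N$ and $Y\subseteq\R^M$ with retractions $r_X,r_Y$ of tubular neighbourhoods $W_X,W_Y$, then $W_X\X W_Y$ is a tubular neighbourhood of $X\X Y\subseteq\R^N\X\R^M\cong\R^{N+M}$ and $r_X\X r_Y$ a retraction of it, and one has
\[
(i_X\X i_Y)\circ(F_1\X F_2)\circ(r_X\X r_Y)=\big(i_X\circ F_1\circ r_X\big)\X\big(i_Y\circ F_2\circ r_Y\big),\qquad (r_X\X r_Y)^{-1}(U\X V)=r_X^{-1}(U)\X r_Y^{-1}(V).
\]

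Therefore $\ind(F_1\X F_2,U\X V)$ is the Euclidean fixed point index of a Cartesian product of two maps on a Cartesian product of two open sets, and the classical multiplicativity of the Euclidean fixed point index yields
\[
\ind(F_1\X F_2,U\X V)=\ind_{\R^N}\!\big(i_X\circ F_1\circ r_X,r_X^{-1}(U)\big)\cdot\ind_{\R^M}\!\big(i_Y\circ F_2\circ r_Y,r_Y^{-1}(V)\big)=\ind(F_1,U)\,\ind(F_2,V),
\]
which is the asserted identity. The only point deserving attention — and thus the (minor) obstacle of the argument — is the compatibility of the chosen embeddings, tubular neighbourhoods and retractions under Cartesian products used above; this is however immediate, so that, consistently with the remark that the proof is left to the reader, no real difficulty arises. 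Alternatively, one may invoke directly the Cartesian‑product property of the fixed point index on ANRs, of which differentiable manifolds are an instance.
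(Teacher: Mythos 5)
Your argument is correct and is essentially the intended one: the paper gives no proof (it declares the statement ``a direct consequence of the properties of the fixed point index'' and leaves it to the reader), the relevant property being precisely the Cartesian-product (multiplicativity) property that you either invoke directly for ANRs or re-derive from the Euclidean index via tubular-neighbourhood retractions, with the product-compatibility of the retractions being the only point to check — exactly as you note. One small caveat on the admissibility step: since $F_1$ and $F_2$ are only defined on $A\supseteq U$ and $B\supseteq V$, you cannot argue through $\overline{U}$, $\overline{V}$ and $\partial(U\times V)$ (the maps need not be defined on the closures); in the convention used in this paper (and in the Furi--Pera--Spadini framework it relies on), admissibility of $(F,U)$ means that the fixed point set of $F$ \emph{in $U$} is compact, so the check reduces to the one-line observation that the fixed point set of $F_1\times F_2$ in $U\times V$ is the product of the two compact fixed point sets, and your conclusion stands unchanged. (Also note the statement's ``$F_1\colon A\subseteq X\to Y$'' is a typo for $F_1\colon A\subseteq X\to X$, as your reading correctly assumes.)
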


\smallskip
Taken together, Theorem \ref{T.3.11} and Proposition \ref{prop:1} yield the following result
about the $T$-translation operator of decoupled systems:

\begin{corollary} \label{cor:disaccoppiato} 
Let $A$, $c$ and $f_2$ be as in \eqref{eq:main}. Consider the following system of equations
\begin{equation} \label{eq:disaccoppiato} 
  \left\{
    \begin{array}{l}
        \dot x = A(t)x + c(t),\\
        \dot y = \lambda f_2\big(t, \hat x(t), y, \lambda\big),
    \end{array}
  \right. \,\, \lambda \geq 0
\end{equation}
where $\hat x$ is the unique $T$-periodic solution of \eqref{eq:linear-ODE}. Let $\mathcal{Q}_T^\lambda$ be the 
$T$-translation operator associated with \eqref{eq:disaccoppiato}, and let $w$ be given by \eqref{mainaverage}.
Given an open $U\X V\subseteq \R^k\X M$ such that $\hat x(0)\notin\D U$ and $w^{-1}(0)\cap\D V=\emptyset$,
 we have, for sufficiently small values of $\lambda\geq 0$,
  \begin{equation} \label{eq:disaccoppiato-ind} 
  \left|\ind \big(\mathcal{Q}_T^\lambda, U\X V\big) \right| 
       =\caratt{U}\big(\hat x (0)\big)\,\big|\deg(w, V)\big|.
  \end{equation}
\end{corollary}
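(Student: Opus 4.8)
\textbf{Proof plan for Corollary \ref{cor:disaccoppiato}.}
The plan is to recognize system \eqref{eq:disaccoppiato} as a genuinely decoupled system: the first equation is exactly \eqref{eq:linear-ODE} (it does not involve $y$ or $\lambda$), while the second equation, once we substitute the fixed function $\hat x$, is of the form \eqref{eq:eq-comodo} with $\varphi(t,y,\lambda):=f_2(t,\hat x(t),y,\lambda)$, a $C^1$ map tangent to $M$ and $T$-periodic in $t$. Consequently the $T$-translation operator $\mathcal{Q}_T^\lambda$ of \eqref{eq:disaccoppiato} factors as a product: $\mathcal{Q}_T^\lambda = \mathcal{F}_T \X Q_{\varphi,T}^\lambda$, where $\mathcal{F}_T$ is the $T$-translation operator of \eqref{eq:linear-ODE} studied in Lemma \ref{lem:first}, and $Q_{\varphi,T}^\lambda$ is the operator of \eqref{eq:eq-comodo} studied in Theorem \ref{T.3.11}. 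Moreover, with this choice of $\varphi$, the average field \eqref{avgVF} becomes precisely $\mathrm{w}_\varphi = w$ as given by \eqref{mainaverage}.

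First I would record the factorization of $\mathcal{Q}_T^\lambda$ and note that $\hat x(0)\notin\D U$ gives admissibility of $\mathcal{F}_T$ on $U$ (Lemma \ref{lem:first}), while $w^{-1}(0)\cap\D V=\emptyset$ gives admissibility of $(w,V)$ for the degree. To invoke Theorem \ref{T.3.11} I need a relatively compact open set, so the second step is a localization/excision argument: since $w$ is fixed-point-free (as a vector field, zero-free) on $\D V$ and $V\cap w^{-1}(0)$ is compact — being closed in $V$ and contained in a compact subset once we shrink — I would replace $V$ by a relatively compact open $V'$ with $w^{-1}(0)\cap V \subseteq V' \subseteq \cl{V'}\subseteq V$ and use excision for the degree and for the fixed point index, so that $\deg(w,V)=\deg(w,V')$ and, for small $\lambda$, $\ind(Q_{\varphi,T}^\lambda,V)=\ind(Q_{\varphi,T}^\lambda,V')$. (If $V$ is already relatively compact this step is vacuous.) Then Theorem \ref{T.3.11} applies on $V'$ and yields $\ind(Q_{\varphi,T}^\lambda,V') = \deg(-w,V')$ for $0<\lambda\le\lambda_0$.

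Third, I would apply Proposition \ref{prop:1} to the product map on $U\X V'$, obtaining
\begin{equation*}
\ind\big(\mathcal{Q}_T^\lambda, U\X V'\big) = \ind\big(\mathcal{F}_T,U\big)\,\ind\big(Q_{\varphi,T}^\lambda,V'\big) = \ind\big(\mathcal{F}_T,U\big)\,\deg(-w,V').
\end{equation*}
Taking absolute values and using Lemma \ref{lem:first}, $\big|\ind(\mathcal{F}_T,U)\big| = \caratt{U}(\hat x(0))$, together with $\big|\deg(-w,V')\big| = \big|\deg(w,V')\big| = \big|\deg(w,V)\big|$, gives \eqref{eq:disaccoppiato-ind} on $U\X V'$; a final excision back from $U\X V'$ to $U\X V$ (the relevant fixed points of $\mathcal{Q}_T^\lambda$ for small $\lambda$ lie in $U\X V'$ because those of $Q^\lambda_{\varphi,T}$ cluster near $w^{-1}(0)\subseteq V'$ as $\lambda\to 0^+$, by Theorem \ref{T.3.11}) yields the stated identity. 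The $\lambda=0$ case is included simply because for $\lambda=0$ the second equation of \eqref{eq:disaccoppiato} is $\dot y=0$, so $\mathcal{Q}_T^0 = \mathcal{F}_T\X\id_M$ and the identity reduces, after noting $\deg(w,V)$ is interpreted in the admissible sense, to Lemma \ref{lem:first} — alternatively one absorbs $\lambda=0$ into "sufficiently small $\lambda\ge 0$" by continuity of the index.

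The main obstacle I anticipate is the bookkeeping in the localization step: Theorem \ref{T.3.11} is stated only for relatively compact $U$, whereas here $V$ is an arbitrary open subset of $M$ subject only to $w^{-1}(0)\cap\D V=\emptyset$, so one must carefully check that $w^{-1}(0)\cap V$ is compact (it is closed in $M$ and, being disjoint from $\D V$, is closed in $\cl V$; compactness then needs either that the admissibility hypothesis "$(w,V)$ admissible for the degree" already builds in relative compactness of $w^{-1}(0)\cap V$, which is the standard convention, or an extra boundedness remark) and that the fixed points of $\mathcal{Q}_T^\lambda$ in $U\X V$ do not escape to $\D(U\X V)$ as $\lambda\to 0^+$ — the latter being exactly the content of the "fixed point free on $\D U$/$\D V$" conclusions of Lemma \ref{lem:first} and Theorem \ref{T.3.11}. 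Everything else is the routine product formula plus excision.
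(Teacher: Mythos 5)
Your proposal is correct and follows exactly the paper's route: define $\varphi(t,q,\lambda):=f_2\big(t,\hat x(t),q,\lambda\big)$, note $\mathrm{w}_\varphi=w$, factor $\mathcal{Q}_T^\lambda=\mathcal{F}_T\X Q_{\varphi,T}^\lambda$, and combine Lemma~\ref{lem:first}, Theorem~\ref{T.3.11}, and Proposition~\ref{prop:1}. Your additional excision step (shrinking $V$ to a relatively compact $V'$ containing $w^{-1}(0)\cap V$) carefully fills in a detail that the paper's one-line conclusion leaves implicit, since Theorem~\ref{T.3.11} is stated only for relatively compact open sets.
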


\begin{proof}
Let $\mathcal{F}_T$ be as in Lemma \ref{lem:first}. Let $\hat x$ be as in \eqref{eq:solution} and define, for 
any $t\in\R$ and $q\in M$ the map $\varphi\colon\R\X M\to\R^s$ given by 
\[
\varphi(t,q,\lambda):=f_2\big(t, \hat x(t), q, \lambda\big).
\]
Hence, $\mathrm{w}_\varphi = w$, with $\mathrm{w}_\varphi$ and $w$ given by \eqref{avgVF} and \eqref{mainaverage}, respectively.

Let $Q_{\varphi,T}^\lambda$ be as in Theorem \ref{T.3.11}. Since equations \eqref{eq:disaccoppiato} are completely decoupled (they are independent of each other) it follows that 
  \begin{equation*}
    \mathcal{Q}_T^\lambda = \mathcal{F}_T\X Q_{\varphi,T}^\lambda.
  \end{equation*}
The assertion follows by Proposition~\ref{prop:1}, Theorem~\ref{T.3.11} and Lemma~\ref{lem:first}.
\end{proof}

%\begin{remark} \label{rmk:corrispondenza}
Observe that a fixed point of $\mathcal{Q}_T^\lambda$ is necessarily of the form $\big(\hat x(0),q\big)$, 
with $q$ a fixed point of the $T$-translation operator associated to equation
\[
 \dot y = \lambda f_2\big(t, \hat x(t), y, \lambda\big).
\]
Thus, the set of fixed points of $\mathcal{Q}_T^\lambda$ coincides with that of the $T$-translation operator 
associated with the following weakly coupled system:
  \begin{equation*}% \label{eq:non-disaccoppiato}
    \left\{\begin{array}{l}
        \dot x = A(t)x + c(t),\\
        \dot y = \lambda f_2(t, x, y,\lambda),
      \end{array}\right. \,\, \lambda \geq 0.
  \end{equation*}
%\end{remark}

\smallskip
This remark alone, however, is not enough to prove Theorem \ref{th:formulaind} because \eqref{eq:main} are 
coupled when $\lambda>0$. To overcome this difficulty, we will use a homotopy argument as shown below.

\begin{proof}[Proof of Theorem \ref{th:formulaind}.]
  Let us consider the following system of coupled equations depending
  on two parameters:
  \begin{equation} \label{eq:two-param} 
    \left\{\hspace{-1mm}
     \begin{array}{l}
        \dot x = A(t)x + c(t) + \lambda \mu f_1(t, x, y, \lambda),\\
        \dot y = \lambda\Big(\mu f_2\big(t,\hat x(t),y,\lambda\big)+(1-\mu)f_2\big(t,x,y,\lambda\big)\Big),
      \end{array}\right.
     \text{$\lambda\geq 0$ and $\mu\in [0, 1]$},
  \end{equation} 
where $\hat x$ is the unique $T$-periodic solution of \eqref{eq:linear-ODE}. 
Consider the map $H$, with domain $\mathcal{D}^H\subseteq\R\X \big(\R^k\X M\big) \X\R$, taking values in 
$\R^k\X M$, and defined by
\begin{equation*}
  H(\lambda, p, q, \mu)
  =\Big(x_{\lambda,\mu}\big(p, q, T\big),
  y_{\lambda,\mu}\big(p, q,  T\big)\Big),
\end{equation*}
where $t\mapsto\big(x_{\lambda,\mu}(p, q,t),y_{\lambda,\mu}(p,q,t)\big)$ denotes the unique maximal
solution of the initial-value problem for the system \eqref{eq:two-param} supplemented by the initial 
conditions $x(0)=p$ and $y(0)=q$, $(p, q)\in \R^k\X M$. Well-known properties of differential equations
imply that $\mathcal{D}^H$ is an open set.

Clearly, if we consider $\lambda=\tilde\lambda$ and $\mu=\tilde\mu$ to be fixed parameters, the map 
$(p, q)\mapsto H(\tilde\lambda,p,q,\tilde\mu)$ is the $T$-translation operator associated with
\eqref{eq:two-param} for $\lambda=\tilde\lambda$ and $\mu=\tilde\mu$. As such, its domain is an
open subset of $\R^k\X M$. It is convenient to put $H^\lambda(\mu, p, q):=H(\lambda,p,q,\mu)$ whenever
defined.

Next, we prove that for fixed $\lambda >0$ small enough, $(\mu,p,q)\mapsto H^\lambda(\mu,p,q)$ defines 
an admissible homotopy on $\cl{U\X V}$ with parameter $\mu\in [0, 1]$.  More precisely, we show that:

\smallskip\noindent\textbf{Claim~1.} 
There exists $\lambda_*>0$ such that for each fixed $\lambda\in (0,\lambda_*]$ the homotopy 
$H^{\lambda}\colon [0,1]\X \cl{U\X V}\to\R^k\X M$ is admissible.

To prove the claim it is enough to show there exists $\lambda_\ast >0$, with the property that for 
each $\lambda\in(0,\lambda_\ast]$, the set
\begin{equation*}
 \mathbf{F}_{\lambda}=\big\{(p, q)\in \cl{U\X V} \; :\; H^\lambda(\mu,p,q)=(p,q),\,
           \text{for some $\mu\in[0,1]$}\big\},
\end{equation*}
which is compact being a closed subset of $H\big([0,\varepsilon]\X\cl{U\X V}\X [0,1]\big)$, is 
contained in $U\X V$.

Suppose by contradiction that this is not the case, i.e., that such a choice of $\lambda_*$ cannot be 
done. Then there are sequences $\{\lambda_n\}\in (0,\infty)$, with $\lambda_n \to 0$ as $n\to+\infty$, 
$\{\mu_n\}\subseteq [0,1]$ and $\{(p_n,q_n)\}\subseteq \D U \X \D V$ such that
\begin{equation}\label{eq:fixpt-H}
  H^{\lambda_n}(\mu_n , p_n, q_n)=(p_n, q_n)\in \textbf{F}_{\lambda_n}.
\end{equation}
For compactness reasons we can assume, as $n\to +\infty$, that up to a subsequence $\mu_n\to\mu_0\in[0,1]$
and $(p_n,q_n)\to(p_0,q_0)\in\D U\X\D V$. Let us denote by $(x_n,y_n)$ the $T$- periodic solution of 
\eqref{eq:two-param} corresponding to $(\mu_n,\lambda_n)$ and starting from $(p_n,q_n)$ at time $t=0$. Due 
to the continuous dependence on initial data, it follows that $x_n\to\hat x$ and $y_n(t)\to y_0(t)\equiv q_0$ 
uniformly on $[0, T]$. In particular we have that $\hat x(0)= p_0$.

Moreover, from the second equation in \eqref{eq:two-param} it follows that
\begin{multline*}
 0=y_n(T)-y_n(0)\\
  =\lambda_n \int_0^T \big[\mu_n f_2\big(t, \hat x(t)  ,y_n(t),\lambda_n \big) 
                             + (1- \mu_n) f_2\big(t, x_n(t),y_n(t),\lambda_n \big) \big]\, dt.
\end{multline*}
So, dividing by $\lambda_n>0$, and taking $n\to +\infty$,  we get
\begin{equation*}
 0= \int_0^T\big[\mu_0 f_2(t,\hat x(t), q_0,0) + (1-\mu_0) f_2(t,\hat x(t), q_0,0)\big]\, dt,
\end{equation*}
whence
\begin{equation*}
  0= \int_0^T f_2 \big(t, \hat  x(t), q_0,0\big) dt = w(q_0).
\end{equation*}
This is a contradiction since we assumed that $w(q)\neq0$ for every $q\in \D V$.

\smallskip\noindent\textbf{Claim~2.} Identity \eqref{eq:disaccoppind} holds.

As a consequence of Claim 1 we have that there exists $\lambda_*>0$ so small that $H^\lambda$ is admissible 
for all $\lambda\in(0,\lambda_*]$.  Then, by the homotopy invariance property, we have that
\begin{equation}\label{eq:ind-deg-relation}
 \begin{aligned}
  \ind(P^\lambda_T, U \X V) &=\ind(H^\lambda(0,\cdot,\cdot),U\X V) \\
                            &=\ind(H^\lambda(1,\cdot,\cdot),U\X V) 
                             =\ind( \mathcal{Q}^\lambda_T,U\X V)
    \end{aligned}
  \end{equation}
where $\mathcal{Q}_T^\lambda$ is the $T$-translation operator of Corollary \ref{cor:disaccoppiato}. Using the 
chain of identities \eqref{eq:ind-deg-relation} along with Corollary~\ref{cor:disaccoppiato}, we get
  \begin{equation*}
    \ind( P^\lambda_T,U \X V) = \ind(\mathcal{Q}^\lambda_T, U \X V) 
                              = \caratt{U}\big(\hat x(0)\big)\, \big|\deg(w, U)\big|,
  \end{equation*}
and the assertion follows.
\end{proof}

\section{Proof of the main result}

In order to prove Theorem \ref{tuno}, it is convenient to consider first what we can call its `finite dimensional 
version'. A crucial notion is the following:
    
\begin{definition}\label{defst}
  Let ${A}$, $f_1$, $f_2$, and $c$ be as in \eqref{eq:main}.  An element $(\lambda,p,q)\in [0,\infty)\X\R^k\X M$ 
  is said to be a \emph{starting triple} for \eqref{eq:main} if the initial value problem \eqref{eq:cauchy} 
  admits a $T$-periodic solution. A starting triple $(\lambda,p,q)$ is called \emph{trivial} if $\lambda=0$.
\end{definition}

In other words, a starting triple $(\lambda,p,q)$ is such that $(p,q)$ is an initial point for a $T$-periodic
solution of \eqref{eq:main} corresponding to $\lambda$.

In what follows, the set of all starting triples for \eqref{eq:main} is denoted by $\mathcal{S}$, and the 
subset of nontrivial ones is called $\mathcal{N}$.

An immediate consequence of the continuous dependence on data is that the closure $\cl{\mathcal{N}}$ of 
$\mathcal{N}$ in $[0,\infty)\X\R^k\X M$ is actually a subset of $\mathcal{S}$. Observe now that $\mathcal{S}$ 
is closed in the set $\mathcal{D}\subseteq [0,\infty)\X\R^k\times M$ introduced in \eqref{defD}, even if it might not be so in $[0,\infty)\X\R^k\X M$. Thus, $\cl{\mathcal{N}}$ is closed also in $\mathcal{D}$. Clearly, $\mathcal{D}$ is open in $\R^k\times M$, hence it is locally compact. Thus $\cl{\mathcal{N}}$, as a closed subset of $\mathcal{D}$, is locally compact as well.

The technical result below characterizes the elements of the $0$-slice $(\cl{\mathcal{N}})_0$.

\begin{lemma}\label{Sclos}
Let $A$, $c$, $f_1$ and $f_2$ be as in \eqref{eq:main}, and let $\hat x$ be as in \eqref{eq:solution}. Let 
also $w$ be as in Theorem \ref{tuno}. Assume, that $f_1$ and $f_2$ are locally Lipschitz in $x$ and $y$. If 
$(0,p,q)\in[0,\infty)\X\R^k\X M$ belongs to the closure $\cl{\mathcal{N}}$ of $\mathcal{N}$ in 
$[0,\infty)\X\R^k\X M$, then $p=\hat x(0)$ and $w(q)=0$.
\end{lemma}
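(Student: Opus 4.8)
The plan is to exploit the characterization of starting triples via fixed points of the Poincaré operator, and to pass to the limit along a sequence of nontrivial starting triples converging to $(0,p,q)$. First I would take a sequence $(\lambda_n, p_n, q_n) \in \mathcal{N}$ with $\lambda_n > 0$, $\lambda_n \to 0$, and $(p_n,q_n) \to (p,q)$ in $\R^k \X M$; such a sequence exists by the definition of the closure. For each $n$, the triple $(\lambda_n, p_n, q_n)$ being a starting triple means the maximal solution $t\mapsto\big(x_n(t), y_n(t)\big)$ of \eqref{eq:cauchy} with $\lambda=\lambda_n$ and $(x_n(0),y_n(0))=(p_n,q_n)$ is $T$-periodic; in particular it is continuable on $[0,T]$, so $(\lambda_n,p_n,q_n)\in\mathcal{D}$, and $P^{\lambda_n}_T(p_n,q_n)=(p_n,q_n)$.

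Next I would pass to the limit using continuous dependence on initial data and on the parameter $\lambda$. Since $(0,p,q)\in\mathcal{D}$ (it lies in $\cl{\mathcal N}\subseteq\mathcal D$, as noted before the lemma) and $\mathcal D$ is open, for $n$ large the solutions $(x_n,y_n)$ are all defined on $[0,T]$ and converge uniformly there to the maximal solution $(x_0,y_0)$ of \eqref{eq:cauchy} for $\lambda=0$ with initial data $(p,q)$. But for $\lambda=0$ the system decouples: $y_0(t)\equiv q$ is constant, and $x_0$ solves \eqref{eq:linear-ODE} with $x_0(0)=p$. Periodicity of each $(x_n,y_n)$ forces, in the limit, $x_0(T)=x_0(0)=p$, i.e.\ $x_0$ is the unique $T$-periodic solution of \eqref{eq:linear-ODE}; by Lemma~\ref{lem:utility-result} this gives $x_0=\hat x$ and hence $p=\hat x(0)$.

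It remains to show $w(q)=0$. Here I would integrate the second equation of \eqref{eq:cauchy} for $\lambda=\lambda_n$ over $[0,T]$: using $y_n(T)=y_n(0)$ we get
\[
0 = y_n(T)-y_n(0) = \lambda_n\int_0^T f_2\big(t, x_n(t), y_n(t), \lambda_n\big)\,dt.
\]
Dividing by $\lambda_n>0$ yields $\int_0^T f_2\big(t, x_n(t), y_n(t), \lambda_n\big)\,dt = 0$ for every $n$. Now let $n\to\infty$: since $x_n\to\hat x$ and $y_n\to q$ uniformly on $[0,T]$ and $\lambda_n\to 0$, continuity of $f_2$ and dominated (or uniform) convergence give
\[
0 = \int_0^T f_2\big(t,\hat x(t), q, 0\big)\,dt = T\,w(q),
\]
so $w(q)=0$, as claimed.

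The main obstacle is making sure the solutions $(x_n,y_n)$ are actually defined on all of $[0,T]$ for large $n$ — that is, that one does not lose continuability in the limit. This is handled precisely by the openness of $\mathcal{D}$ in $[0,\infty)\X\R^k\X M$ (equivalently, the well-known continuous dependence of the "blow-up time" on data) together with the fact that $(0,p,q)\in\cl{\mathcal N}\subseteq\mathcal D$; once this is in place, everything else is a routine application of continuous dependence, the uniqueness of the $T$-periodic solution of the linear equation, and passage to the limit under the integral sign.
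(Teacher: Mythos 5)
Your proposal is correct and follows essentially the same approach as the paper's proof: take a sequence of nontrivial starting triples converging to $(0,p,q)$, pass to the limit via continuous dependence to get $p=\hat x(0)$, then integrate the second equation over $[0,T]$, divide by $\lambda_n>0$, and take the limit to obtain $w(q)=0$. Your justification that the limit $x_0$ equals $\hat x$ (by inheriting $T$-periodicity in the limit and invoking uniqueness from Lemma~\ref{lem:utility-result}) is slightly more explicit than the paper's, which asserts the convergence to $(\hat x,\cl{q})$ directly, but the arguments are the same in substance.
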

\begin{proof}
 It is enough to show that if $\{(\lambda_i,p_i,q_i)\}_{i\in\N}\subseteq\mathcal{N}$ is a sequence with
 $\lambda_i> 0$ and $(\lambda_i,p_i,q_i)\to (0,p_0,q_0)$ then $p_0=\hat x(0)$ and $w(q_0)=0$.
 
 Denote by $(x_i,y_i)$ the (unique) solution of
 \begin{equation*}
\left\{ \begin{array}{l} 
  \dot x = A(t)x + c(t) + \lambda_i f_1(t, x, y,\lambda_i), \\
  \dot y = \lambda_i f_2(t, x, y,\lambda_i),\\
  x(0)=p_i,\\
  y(0)=q_i.
         \end{array} \right. \quad
  \lambda\geq 0,
\end{equation*}
By continuous dependence, $\{(x_i,y_i)\}$ converges uniformly to $\big(\hat x,\cl{q}_0\big)$. Thus, 
$p_0=\hat x(0)$. Let us prove that $w(q_0)=0$. Clearly,
 \[
  0=y_i(T)-y_i(0)=\lambda_i\int_0^Tf_2\big(t,x_i(t),y_i(t),\lambda_i\big)\,dt, \quad\text{for all $i\in\N$}.
 \]
Since $\lambda_i\neq 0$, we get $0=\int_0^Tf_2\big(t,x_i(t),y_i(t),\lambda_i\big)\,dt$, passing to the limit for 
$\lambda_i\to 0$, we get $0=\int_0^Tf_2\big(t,\hat x(t),q_0,0\big)\,dt=w(q_0)$, as desired.
\end{proof}

Similarly to what was done for Theorem \ref{tuno}, we introduce two projections $\mathrm{pr}_1 \colon [0,\infty)\X\R^k\times M\to [0,\infty)\times \R^k$ and $\mathrm{pr}_2 \colon [0,\infty)\X\R^k\times M\to [0,\infty)\times M$ given by
\[
\left.
\begin{array}{l}
\mathrm{pr}_1 (\lambda,p,q):=(\lambda,p)\\
\mathrm{pr}_2 (\lambda,p,q):=(\lambda,q)
\end{array}
\right\}\quad\text{for all $(\lambda,p,q)\in [0,\infty)\X\R^k\X M$}.
\]
So that, given an open subset $\mathcal{U}$ of $[0,\infty)\X\R^k\X M$, the $0$-slices $\big(\mathrm{pr}_1(\mathcal{U})\big)_0$ and $\big(\mathrm{pr}_2(\mathcal{U})\big)_0$, are open in $\R^k$ and $M$, respectively. 

To help clarify the relations between the different projections introduced so far, consider the following commutative diagram:
{\small
\begin{equation}\label{diagramma}
\begin{CD}
 [0,\infty)\X C_T(\R^k)   @<{\pi_1}<< [0,\infty)\X C_T(\R^k\X M) @>{\pi_2}>> [0,\infty)\X C_T( M)\\
 @AAA                                 @AAA                             @AAA         \\
 [0,\infty)\X\R^k @<{\mathrm{pr}_1}<<  [0,\infty)\X \R^k\X M @>{\mathrm{pr}_2}>>   [0,\infty)\X M
\end{CD}
\end{equation}
}
where the vertical arrows, from left to right, are defined as the closed embeddings $(\lambda,p)\mapsto(\lambda,\cl{p})$, $(\lambda,p,q)\mapsto(\lambda,\cl{p},\cl{q})$ and $(\lambda,q)\mapsto(\lambda,\cl{q})$.

\medskip 
The main result concerning the starting triples for \eqref{eq:main} is Theorem \ref{tdue} below. The argument
of the proof follows, with some adaptations, that of \cite[Thm.\ 3]{SpSepVar}, see also \cite[Thm.\ 3.1]{fuspa96}. 
  
\begin{theorem}\label{tdue} 
Let $A$, $c$, $f_1$ and $f_2$ be as in \eqref{eq:main}, and let $\hat x$ be as in \eqref{eq:solution}. 
Assume, as in Theorem~\ref{th:formulaind} that $f_1$ and $f_2$ are locally Lipschitz in $x$ and $y$.
Define $w$ as in Theorem~\ref{tuno}. Let $\mathcal{U}$ be a given open subset of $\mathcal{D}$, and assume 
that $\deg\big(w,\big(\mathrm{pr}_2 (\mathcal{U})\big)_0\big)$ is well-defined and nonzero and that 
$\hat x(0)\in\big(\mathrm{pr}_1 (\mathcal{U})\big)_0$.  
Then, there exists a connected set $G$ of nontrivial starting triples for \eqref{eq:main} in $\mathcal{U}$ 
whose closure in $\mathcal{D}$ meets the set
  \begin{align*}
    Z:=\big\{(0, \hat x(0), q)\in \mathcal{U} :   w(q) =0  \big\}
  \end{align*}
  and is not contained in any compact subset of $\mathcal{U}$.
\end{theorem}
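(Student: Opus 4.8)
The plan is to reduce Theorem~\ref{tdue} to an application of a general continuation principle (of Leray–Schauder / Furi–Pera type for the fixed point index) applied to the Poincar\'e $T$-translation operator $P_T^\lambda$ restricted to suitable open sets, using the index formula of Theorem~\ref{th:formulaind} as the nonvanishing input. First I would recall the well-known correspondence: nontrivial starting triples in $\mathcal{U}$ are exactly the triples $(\lambda,p,q)\in\mathcal{U}$ with $\lambda>0$ and $(p,q)$ a fixed point of $P_T^\lambda$; so $\mathcal{N}\cap\mathcal{U}$ is the set we want a connected component of. The set $\cl{\mathcal{N}}$ is closed in the locally compact space $\mathcal{D}$ (as observed before the statement), hence locally compact, and by Lemma~\ref{Sclos} its $0$-slice inside $\mathcal{U}$ is contained in $Z$, while conversely every point of $Z$ lies in $\cl{\mathcal{N}}$ because of Theorem~\ref{th:formulaind}: near such a point one can build a small product neighborhood $U\X V$ with $\hat x(0)\in U$ and $\deg(w,V)\neq 0$ (using that $\deg(w,(\mathrm{pr}_2(\mathcal{U}))_0)\neq0$ and an excision/additivity argument to localize a nonzero piece of the degree), so $\ind(P_T^\lambda,U\X V)\neq 0$ for small $\lambda>0$, forcing a fixed point, i.e.\ a nontrivial starting triple, arbitrarily close to the given point of $Z$.

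Next I would invoke the topological lemma that does the actual ``emanation'': if $Y$ is a locally compact metric space, $W\subseteq Y$ open, $K\subseteq W$ compact, and $K$ cannot be separated in $Y$ from the complement of $W$ by any relatively open-closed set (equivalently, the ``index'' of $K$ relative to pairs $(W,Y)$ is nontrivial), then there is a connected subset of $W\setminus K$ whose closure meets $K$ and is not contained in any compact subset of $W$. This is exactly the mechanism used in \cite[Thm.\ 3]{SpSepVar} and \cite[Thm.\ 3.1]{fuspa96}. I would apply it with $Y=\cl{\mathcal{N}}\cup Z$ (a closed, hence locally compact, subset of $\mathcal{D}$), $W=Y\cap\mathcal{U}$, and $K=Z$ — or, more carefully, a compact subset $K\subseteq Z$ obtained from the compact core of $(\mathrm{pr}_2(\mathcal{U}))_0$ supporting the degree — and the required nontriviality of $K$ in $(W,Y)$ will be supplied precisely by Theorem~\ref{th:formulaind}: for a product neighborhood $U\X V$ with $\cl{U\X V}\subseteq\mathcal{U}_\lambda$, the nonzero fixed point index $\caratt{U}(\hat x(0))|\deg(w,V)|$ of $P_T^\lambda$ obstructs the existence of a small admissible homotopy pushing $K$ off, which is the standard way such a ``generalized index'' nonvanishing is verified. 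The output connected set $G$ is then the component furnished by the lemma, consisting (away from $Z$) of nontrivial starting triples.

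The main obstacle, and the step requiring the most care, is the bookkeeping that matches the finite-dimensional index-theoretic nonvanishing from Theorem~\ref{th:formulaind} with the hypotheses of the abstract continuation lemma — in particular handling the fact that the ``slice at $\lambda$'' of $\mathcal{U}$ varies with $\lambda$, so one cannot use a single product set $U\X V$ uniformly. I would deal with this by first shrinking: by the openness of $\mathcal{D}$ and of $\mathcal{U}$ and by compactness, choose $\lambda_*>0$ and open $U\subseteq\R^k$, $V\subseteq M$ with $\hat x(0)\in U$, $\cl{V}\subseteq(\mathrm{pr}_2(\mathcal{U}))_0$ compact, $\deg(w,V)\neq0$, $w^{-1}(0)\cap\D V=\emptyset$, $\hat x(0)\notin\D U$, and $[0,\lambda_*]\X\cl{U}\X\cl{V}\subseteq\mathcal{D}\cap\mathcal{U}$; then Theorem~\ref{th:formulaind} gives $\ind(P_T^\lambda,U\X V)\neq0$ for all small $\lambda>0$. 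A routine homotopy/compactness argument (of the same flavour as Claim~1 in the proof of Theorem~\ref{th:formulaind}) shows that fixed points of $P_T^\lambda$ in $\cl{U\X V}$ stay away from $\D(U\X V)$ and converge, as $\lambda\to0$, into $Z$; combining this with the abstract lemma applied to $Y=\cl{\mathcal{N}}\cup Z$ inside the ``box'' $[0,\lambda_*]\X U\X V$ and then observing that $G$ cannot be relatively compact in $\mathcal{U}$ (else its closure, a compact set meeting $Z$ and consisting otherwise of nontrivial triples, would be separated from $\D\mathcal{U}$, contradicting the index nonvanishing via the lemma), yields the claim. The ``emanation from $Z$'' and ``not contained in any compact subset of $\mathcal{U}$'' conclusions both come out of the same application of the lemma, so no separate argument is needed for them.
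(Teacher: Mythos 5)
Your proposal follows essentially the same route as the paper's proof: reduce to the connection lemma of Furi--Pera type (Lemma~\ref{lemcon}, from \cite{FP93}), feed it the fixed point index computation of Theorem~\ref{th:formulaind}, and derive the needed ``no compact separating set'' hypothesis by contradiction.

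One point to watch, however, is the role you assign to the fixed product $[0,\lambda_*]\X U\X V$. In the paper's argument the lemma is applied directly to $Y=\cl{\mathcal{N}}\cap\mathcal{U}$ and $Z$, so the ``not contained in any compact subset of $\mathcal{U}$'' conclusion is obtained in one stroke; the product set $U\X V$ appears only \emph{inside} the contradiction argument, extracted \emph{from} the hypothetical compact $C\subseteq\cl{\mathcal{N}}\cap\mathcal{U}$ with empty relative boundary (its compactness gives an open $A\supseteq C$ whose $\lambda$-slices stabilize near $\lambda=0$ to a product $U\X V$ containing all the relevant zeros of $w$, which is what makes the excision step $\deg(w,V)=\deg(w,(\mathrm{pr}_2(\mathcal{U}))_0)$ legitimate, and which makes the generalized homotopy $\lambda\mapsto P_T^\lambda$ on the varying slices $A_\lambda$ admissible). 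If instead you fix the box $[0,\lambda_*]\X U\X V$ \emph{before} invoking the lemma and apply the lemma inside it, the output connected set is only known to be noncompact \emph{in the box}; it could leave through $\D(U\X V)$ and still be relatively compact in $\mathcal{U}$, and your proposed repair (re-running the index argument on a hypothetical compact closure of $G$) is not obviously available once $G$ escapes the region where the product structure holds. So the order of operations matters: apply the lemma to the full locally compact pair $\big(\cl{\mathcal{N}}\cap\mathcal{U},Z\big)$ first, and only then use compactness of the hypothetical $C$ to manufacture the product set needed for the index computation.

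A secondary remark: you justify $Z\subseteq\cl{\mathcal{N}}$ by localizing the degree near a point of $Z$, but this can fail at zeros of $w$ with vanishing local index. This is not needed for the conclusion (the theorem only asserts that the closure of $G$ \emph{meets} $Z$), and in the paper's contradiction argument it never arises because the compact $C$ is assumed to contain $Z$, not to lie in $\cl{\mathcal{N}}$ point-by-point around every zero. Just avoid asserting the pointwise inclusion as a lemma.
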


The diagram shown in figure~\ref{figuno} illustrates the situation described in Theorem~\ref{tdue} in the case of equation \eqref{nicexa}.  More specifically, one directly sees that $\hat x(t)= \frac{1}{20}(\sin t-\cos t)+1$, so that $\hat x(0)=\frac{19}{20}$ and (following \eqref{mainaverage}) that
\[
 w(q)=\frac{1}{2\pi}\int_0^{2\pi}-\left(\frac{1}{2}+q+2\hat x(t)\sin t\right) dt= \pi\left(2q+\frac{11}{10}\right).
\]
Hence, by Theorem \ref{tdue}, there is a connected set of nontrivial starting points emanating from $\left\{(0,\frac{19}{20},-\frac{11}{20})\right\}$. In figure~\ref{figdue} we exhibit the projections of the portion of $\Gamma$ considered in figure~\ref{figuno} on the planes $xy$ and $y\lambda$.
\begin{figure}[ht!]
 \begin{tabular}{c@{\hspace{5pt}}c}
  \subfigure[\scriptsize Projection on the $xy$ plane]{\includegraphics[width=0.48\linewidth]{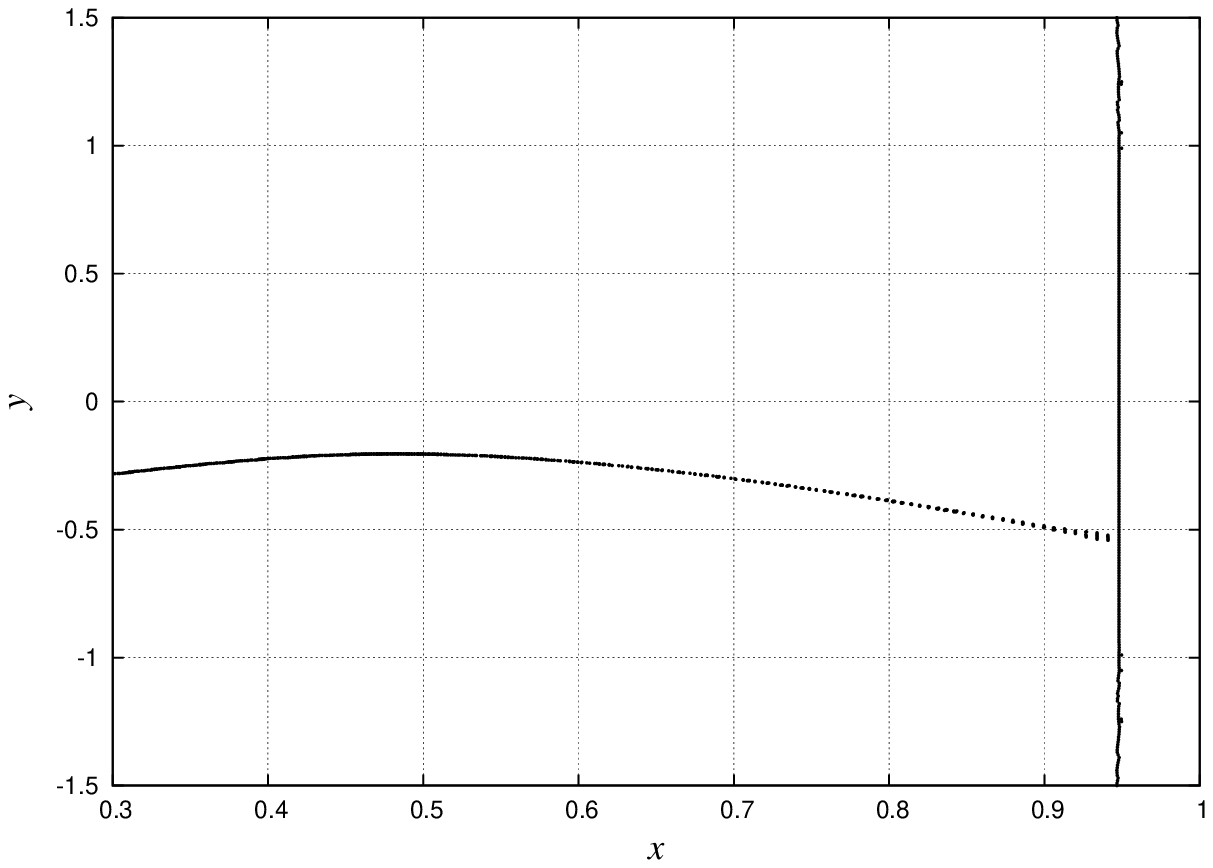}} &
  \subfigure[\scriptsize Projection on the $y\lambda$ plane]{\includegraphics[width=0.48\linewidth]{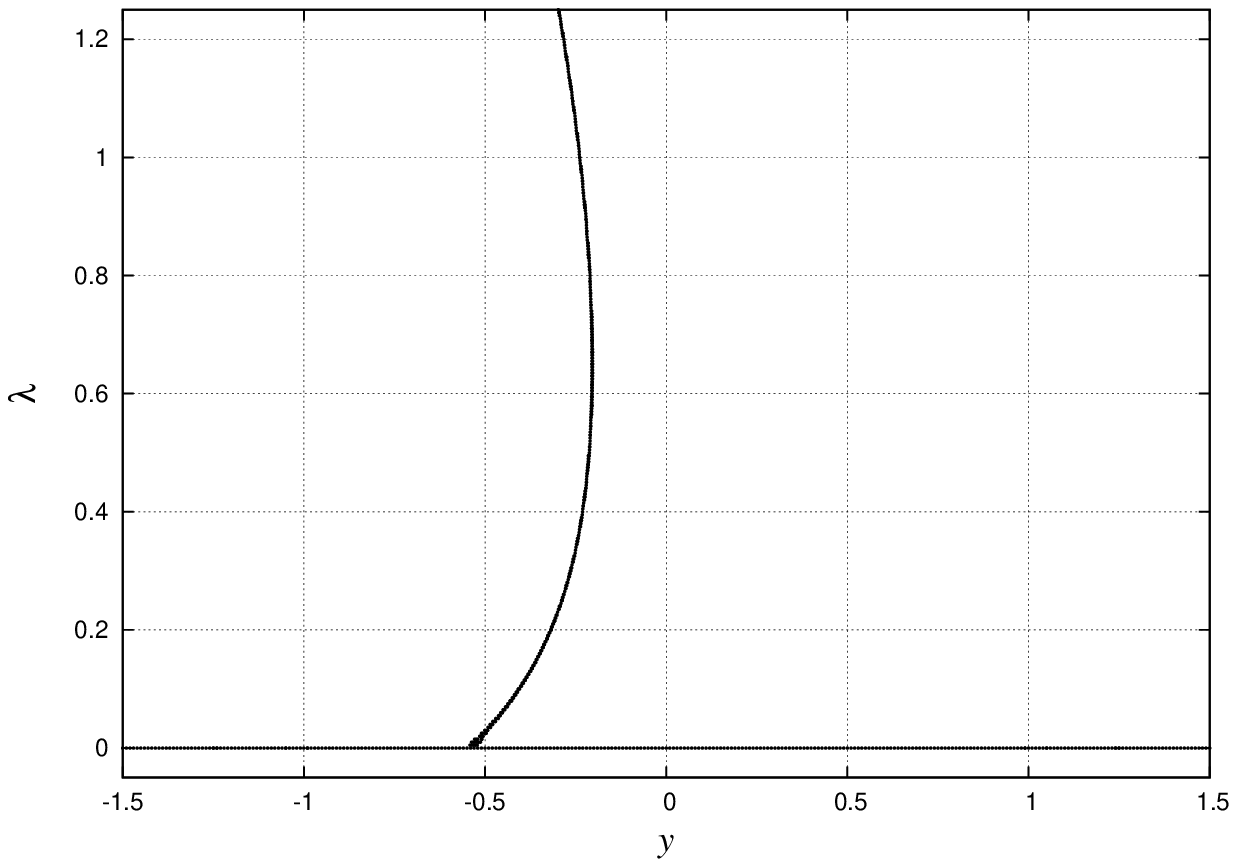}}
 \end{tabular}
\caption{Projections of $T$-periodic solutions of \eqref{nicexa}}\label{figdue}
\end{figure}

Before providing the proof of Theorem \ref{tdue} we recall the following well known global connection result (see \cite{FP93}).
\begin{lemma}\label{lemcon}
  Let $Y$ be a locally compact metric space and let $Z$ be a compact
  subset of $Y$. Assume that any compact subset of $Y$ containing $Z$
  has nonempty boundary. Then $Y \setminus Z$ contains a connected set whose
  closure (in $Y$ ) intersects $Z$ and is not compact.
\end{lemma}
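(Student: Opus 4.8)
The plan is to prove the contrapositive by a Whyburn‑type separation argument; the statement is a classical ``global connection'' lemma (cf.\ \cite{FP93}). I would first record the reformulation: since $Y$ is metric, the compact set $Z$ is closed, so a compact $K\supseteq Z$ has empty boundary exactly when it is open, i.e.\ clopen. Thus the hypothesis says that $Z$ has \emph{no} compact clopen neighbourhood in $Y$. Assuming, towards a contradiction, that $Y\setminus Z$ contains no connected set whose closure meets $Z$ and is non‑compact, the aim is to manufacture a compact clopen neighbourhood of $Z$.

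Using local compactness I would build an exhaustion: choose open $Z\subseteq V_1\subseteq\overline{V_1}$ with $\overline{V_1}$ compact, and inductively open $V_{n+1}$ with $\mathcal K_n\cup\overline{V_n}\subseteq V_{n+1}\subseteq\overline{V_{n+1}}$, $\overline{V_{n+1}}$ compact, where $\mathcal K_n$ is the union of the connected components of $\overline{V_n}$ that meet $Z$. A Hausdorff‑limit argument in the (compact) hyperspace of subcontinua of $\overline{V_n}$ shows each $\mathcal K_n$ is compact, and clearly $Z\subseteq\mathcal K_n$. The decisive dichotomy is this: if $\mathcal K_n\cap\partial V_n=\emptyset$, then $\mathcal K_n$ and $\partial V_n$ are disjoint closed subsets of the compact metric space $\overline{V_n}$ met by no common subcontinuum (a continuum hitting $\mathcal K_n$ lies in a single component of $\overline{V_n}$ that meets $Z$, hence inside $\mathcal K_n$), so Whyburn's lemma splits $\overline{V_n}=W_1\sqcup W_2$ into clopen pieces with $\mathcal K_n\subseteq W_1$, $\partial V_n\subseteq W_2$; then $W_1\subseteq V_n$ is open in $Y$ and compact, a compact clopen neighbourhood of $Z$ --- contradiction. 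Hence $\mathcal K_n\cap\partial V_n\neq\emptyset$ at every stage, which forces $\mathcal K_{n-1}\subsetneq\mathcal K_n$ and makes $\mathcal K_\infty:=\bigcup_n\mathcal K_n$ non‑compact; moreover the failed dichotomy furnishes, for each $n$, a subcontinuum $C_n\subseteq\overline{V_n}$ with $C_n\cap Z\neq\emptyset$ and $C_n\cap\partial V_n\neq\emptyset$, a ``bridge'' joining $Z$ to the $n$‑th frontier of the exhaustion $Y':=\bigcup_n V_n$.

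Next I would pass to the limit. Pick $x_n\in C_n\cap Z$ and let $C_n^{(m)}$ be the closure in $C_n$ of the component of $x_n$ in $C_n\cap\mathrm{int}\,\overline{V_m}$; boundary bumping inside the continuum $C_n$ shows $C_n^{(m)}$ is a subcontinuum of $\overline{V_m}$ that contains $x_n$ and meets $\partial V_m$ (for $m<n$), with $C_n^{(m)}\subseteq C_n^{(m+1)}$. By compactness of the hyperspace of subcontinua and a diagonal extraction, along a subsequence $C_n^{(m)}\to L_m$ (a subcontinuum of $\overline{V_m}$) for every $m$, with $L_m\subseteq L_{m+1}$ and all $L_m$ sharing a common limit point $x_\ast\in Z$. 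Hence $L:=\bigcup_m L_m$ is connected, meets $Z$, and meets every $\partial V_m$, so it lies in no compact subset of $Y$; in particular $\overline L$ is non‑compact. Finally, a boundary‑bumping argument extracts from $L$ a connected subset $C\subseteq Y\setminus Z$ with $\overline C\cap Z\neq\emptyset$ and $\overline C$ non‑compact, contradicting the standing assumption and finishing the proof.

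The main obstacle is the limiting step, and within it the need to keep the limit \emph{connected}: a Hausdorff limit of the raw intersections $C_n\cap\overline{V_m}$ can fail to be connected, since the bridges may oscillate across $\partial V_m$. This is precisely why one takes limits of the components $C_n^{(m)}$ anchored at a fixed $Z$‑point, rather than of the intersections themselves, and why controlling these components (so that each still reaches $\partial V_m$) forces the use of boundary bumping for continua; the final extraction of $C\subseteq Y\setminus Z$ is routine when $Y$ is locally connected and needs a little extra care otherwise. A slicker but less self‑contained alternative is to invoke the classical fact that a compact connected component of a locally compact space has a neighbourhood basis of compact open sets: were every component of $Y$ meeting $Z$ compact, finitely many such neighbourhoods would cover $Z$ and unite to a compact clopen neighbourhood of $Z$, a contradiction; so some component meeting $Z$ is non‑compact, and one extracts $C$ from it as above.
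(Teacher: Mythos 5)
There is a genuine gap, and it sits exactly where you flagged the ``main obstacle'': the step from ``$L$ meets every $\partial V_m$'' to ``$L$ lies in no compact subset of $Y$, in particular $\overline L$ is non-compact''. That inference would require every compact subset of $Y$ (or at least $\overline L$) to be absorbed by some $V_m$, but your recursion only demands $V_{m+1}\supseteq \mathcal K_m\cup\overline{V_m}$ with $\overline{V_{m+1}}$ compact; nothing forces the sets $V_m$ to exhaust $Y$, and since $Y$ need not be $\sigma$-compact no choice can exhaust it in general. Worse, the union $\bigcup_m V_m$ may itself be relatively compact, with the boundaries $\partial V_m$ accumulating on its frontier. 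Concretely, take $Y=\mathbb{R}^2$, $Z=\{0\}$ and $V_m=B(0,1-1/m)$: this is a legitimate run of your construction (here $\mathcal K_m=\overline{V_m}$, so the dichotomy always produces bridges $C_m=\overline{V_m}$), and your limit set $L$ is contained in the open unit disc, so $\overline L$ is compact and the argument produces nothing, even though the lemma's conclusion is of course true for $\mathbb{R}^2$. So the proof as written does not close; repairing it would require building the exhaustion so that its boundaries cannot accumulate at a point of $Y$ (e.g.\ by inserting uniformly thick relatively compact neighbourhoods at each step and proving such a choice is possible), which is precisely the delicate point in a general locally compact, non-proper metric space. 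The final extraction of $C\subseteq Y\setminus Z$ from $L$ is also only sketched, and your ``slicker alternative'' does not rescue matters: it reduces the statement to the same statement on a non-compact component $C_0$ (with $C_0$ connected), and then appeals to the flawed extraction ``as above''.

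For comparison: the paper offers no proof at all, citing \cite{FP93}. The classical argument there avoids your exhaustion entirely: one passes to the one-point compactification $Y\cup\{\infty\}$ (the hypothesis rules out $Y$ compact) and applies the separation (``cut-wire''/Whyburn) theorem to the disjoint closed sets $Z$ and $\{\infty\}$: if they could be separated by complementary compact sets, the piece containing $Z$ would be a compact clopen subset of $Y$ containing $Z$, contradicting the hypothesis exactly as in your dichotomy step; hence some continuum joins $Z$ to $\infty$, and the desired connected subset of $Y\setminus Z$ is carved out of it by a boundary-bumping argument. Your dichotomy/Whyburn step and your use of boundary bumping are sound and in the right spirit; the missing ingredient is the compactification (or an equivalent device) that guarantees non-compactness of the closure, which is the very heart of the lemma.
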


\begin{proof}[Proof of Theorem \ref{tdue}.]
Observe that since $\hat x(0)\in\big(\mathrm{pr}_1 (\mathcal{U})\big)_0$ and $\deg\big(w,\big(\mathrm{pr}_2 (\mathcal{U})\big)_0\big)$ 
is well-defined and nonzero one has that the set $Z$ is compact and nonempty. 

Since $\cl{\mathcal{N}}$ is locally compact, so is $\mathcal{N}\cap\mathcal{U}$. It is enough to prove that the 
pair
\[
 \Big(\cl{\mathcal{N}}\cap\mathcal{U},Z\Big)
\]
satisfies the assumptions of Lemma \ref{lemcon}. 

Assume by contradiction that there exists a compact set $C\subseteq\cl{\mathcal{N}}\cap\mathcal{U}$ containing $Z$ 
with empty boundary in $\cl{\mathcal{N}}\cap\mathcal{U}$. Clearly $C$ is open in $\cl{\mathcal{N}}\cap\mathcal{U}$
and, as $\mathcal{U}$ is open in $[0,\infty)\X\R^k\X M$, $C$ is open also in $\cl{\mathcal{N}}$. Thus, there exists 
an open set $A\subseteq\mathcal{U}$ with $C=\mathcal{N}\cap A$. Furthermore, $C$ being compact, one 
can assume, without loss of generality, that there exists $\varepsilon>0$ and open sets $U\subseteq\R^k$ and
$V\subseteq M$ such that, for all $\lambda\in[0,\varepsilon]$, the following equalities between slices occur:
\[
A_\lambda=A_\varepsilon=A_0,\quad\text{and}\quad A_\lambda=U\X V.
\]
In particular, since $\hat x(0)\in\big(\mathrm{pr}_1 (\mathcal{U})\big)_0$, we have $\hat x(0)\in U$. By Theorem \ref{th:formulaind} and the excision property of the degree, we get, for all $0<\lambda\leq\varepsilon$,
\begin{equation}\label{ind-deg}
 \begin{split}
   \left|\ind\big(P_T^\lambda, A_\lambda\big)\right|
     =&\left|\ind\big(P_T^\lambda, U\X V\big)\right|\\
     =&\caratt{U}\big(\hat x(0)\big)\big|\deg(w,V)\big|
     =\big|\deg(w,\big(\mathrm{pr}_2 (\mathcal{U})\big)_0)\big|
     \neq 0.
   \end{split}
\end{equation}

Now, $C$ being compact, there necessarily exists $\delta>0$ such that $C_\lambda=\emptyset$. That is,
$P_T^\delta$ is fixed point free on $A_\delta$. Thus, from the generalized homotopy invariance property of the 
degree, we have
\begin{equation}\label{zeroind}
 0=\ind\big(P_T^\delta, A_\delta\big)=\ind\big(P_T^\lambda, A_\lambda\big),
\end{equation}
for all $0<\lambda\leq\delta$. 
Clearly, \eqref{zeroind} contradicts \eqref{ind-deg} for $0<\lambda\leq\min\{\varepsilon,\delta\}$.
\end{proof}

Starting triples are important because, when some regularity is imposed on the differential equation,
they are closely related to the $T$-triples. More precisely, there exists a homeomorphism between
the relative sets that respects the notion of triviality, as shown by the following lemma:

\begin{lemma}\label{lemmatuno}
  Let $A$, $c$, $f_1$, and $f_2$ be as in Theorem \ref{tdue}, and let $X$ and $\mathcal{S}$ be, respectively, 
  the sets of $T$-triples and of starting triples for \eqref{eq:main}. Let $h\colon X\to\mathcal{S}$ be the 
  map given by $h(\lambda,x,y)=\big(\lambda,x(0),y(0)\big)$. Then $h$ is a homeomorphism that makes trivial
  $T$-triples correspond to trivial starting pairs and vice versa.
\end{lemma}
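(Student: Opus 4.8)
The plan is to establish that $h$ is a continuous bijection whose inverse is also continuous, and that it preserves triviality; the latter is immediate since $h$ fixes the $\lambda$-coordinate, so a $T$-triple has $\lambda=0$ if and only if its image does. The substantive content is therefore the homeomorphism claim, and the crucial ingredient is the local Lipschitz hypothesis on $f_1$ and $f_2$, which guarantees uniqueness of solutions to the Cauchy problem \eqref{eq:cauchy}.

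First I would check that $h$ is well-defined and injective. Given a $T$-triple $(\lambda,x,y)$, the pair $(x,y)$ is a $T$-periodic solution of \eqref{eq:main} with $\lambda=\mu=\lambda$, hence in particular solves \eqref{eq:cauchy} with $(p,q)=(x(0),y(0))$; thus $(\lambda,x(0),y(0))$ is a starting triple, so $h$ maps $X$ into $\mathcal{S}$. Injectivity follows from uniqueness for the initial value problem: if $h(\lambda,x,y)=h(\lambda',x',y')$ then $\lambda=\lambda'$ and $(x(0),y(0))=(x'(0),y'(0))$, so $(x,y)$ and $(x',y')$ solve the same Cauchy problem \eqref{eq:cauchy} and hence coincide on their common interval of existence, which (being $T$-periodic solutions) is all of $\R$. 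For surjectivity, take a starting triple $(\lambda,p,q)\in\mathcal{S}$: by definition \eqref{eq:cauchy} with this $\lambda$ and $(p,q)$ admits a $T$-periodic solution $(x,y)$, and then $(\lambda,x,y)$ is a $T$-triple with $h(\lambda,x,y)=(\lambda,p,q)$.

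Next I would address continuity of $h$ and of $h^{-1}$. Continuity of $h$ is the easy direction: the evaluation map $C^1(\R^k\X M)\to\R^k\X M$, $(x,y)\mapsto(x(0),y(0))$ is continuous (indeed $1$-Lipschitz in the sup-type norm on $C^1$), so $h$ is continuous as the product of this with the identity on $[0,\infty)$. For $h^{-1}$, which sends $(\lambda,p,q)$ to $(\lambda,x,y)$ where $(x,y)$ is the unique $T$-periodic solution of \eqref{eq:cauchy}, continuity is exactly continuous dependence of solutions on initial data and parameters: if $(\lambda_n,p_n,q_n)\to(\lambda_0,p_0,q_0)$ in $\mathcal{S}$, then since all these triples lie in $\mathcal{D}$ (solutions continuable up to $T$, hence by periodicity to all of $\R$), the corresponding solutions converge uniformly on $[0,T]$, and since they are $T$-periodic this gives convergence in $C_T(\R^k\X M)$; upgrading to $C^1$-convergence is immediate from the differential equation itself, since $\dot x_n, \dot y_n$ are expressed through the continuous right-hand sides evaluated along the converging solutions. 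Hence $h^{-1}$ is continuous.

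The main obstacle, such as it is, lies in bookkeeping the function-space topologies carefully: one must be sure that $T$-triples are genuinely elements of $[0,\infty)\X C^1(\R^k\X M)$ with a topology in which evaluation and continuous dependence behave as expected, and in particular that a $T$-periodic solution continuable up to $t=T$ is automatically continuable on all of $\R$ — this is where periodicity is silently used to pass from local continuable dependence on $\mathcal{D}$ to genuine continuity of $h^{-1}$ on $\mathcal{S}$. There is no deep difficulty; the lemma is essentially a repackaging of existence, uniqueness, and continuous dependence, and the proof is expected to be short.
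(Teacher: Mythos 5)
Your proposal is correct and follows exactly the same route as the paper's (very terse) proof: $h$ is a bijection because of existence and uniqueness for \eqref{eq:cauchy} under the local Lipschitz hypothesis, $h$ is continuous because evaluation at $t=0$ is continuous, $h^{-1}$ is continuous by continuous dependence on initial data and parameters, and triviality is preserved trivially since $h$ fixes the $\lambda$-coordinate. You have merely spelled out the details that the paper leaves implicit — in particular the upgrade from uniform ($C_T$) convergence to $C^1$ convergence via the differential equation itself, and the silent use of periodicity to ensure solutions in $\mathcal{S}$ are globally defined — which is a legitimate and useful filling-in rather than a different argument.
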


\begin{proof}
Obiously, $h$ is continuous and surjective. Since $f_1$, and $f_2$ are locally Lipschitz in $x$ and $y$, it is 
also clearly injective. The continuity with respect to initial data implies that the inverse of $h$ is 
continuous. Thus $h$ is a homeomorphism.

Finally observe that by the definition of $h$, a $T$-triple whose $\lambda$-component is $0$ can only correspond 
to a starting triple with the same property and vice versa.
\end{proof}

\begin{remark}\label{Xclos}
 Observe that by Lemmas \ref{Sclos} and \ref{lemmatuno}, given a set $C\subseteq X$ consisting of nontrivial 
 $T$-triples, it follows that a triple $(0,x,y)\in[0,\infty)\X C_T(\R^k\X M)$ belongs to the closure of $C$ if and only if $x=\hat x$ and $y=\cl{q}$ with $w(q)=0$.
\end{remark}

\smallskip
We are now ready to proceed with the proof of Theorem \ref{tuno}.

\begin{proof}[Proof of Theorem \ref{tuno}]
Assume first that $f_1$ and $f_2$ are locally Lipschitz, so that the Cauchy problem \eqref{eq:cauchy} admits 
unique solution for any $(p,q)\in\R^k\X M$ and $\lambda\geq 0$.

Consider the set
 \[
  \mathcal{S}_\Omega :=
     \big\{(\lambda,p,q)\in\mathcal{S}: \text{the solution of \eqref{eq:cauchy} is contained in $\Omega$}\big\}.
 \]
Clearly, $\mathcal{S}_\Omega$ is open in $\mathcal{S}$. Thus, there exists an open subset $\mathcal{U}_\Omega$ 
of $\mathcal{D}$ such that $\mathcal{S}\cap\mathcal{U}_\Omega=\mathcal{S}_\Omega$. Let $h$ be as in Lemma 
\ref{lemmatuno}. Let $X$ be the set of $T$-triples for \eqref{eq:main}. It is not difficult to see that $h$ maps $X\cap\Omega$ homeomorphically onto $\mathcal{S}_\Omega$.

It is not difficult to verify that $\hat x(0)\in\mathcal{O}_1(\Omega)$ if and only if $\hat x(0)\in\big(\mathrm{pr}_1 (\mathcal{U}_\Omega)\big)_0$. Also, by inspection of equation \eqref{eq:main}, one has that for any $q\in M$, $\big(0,\hat x(0),q\big) \in \mathcal{S}$ and that, conversely, the projection of $\mathcal{S}_0$ onto the third component is the whole $M$. In particular, for any $q\in M$ we have $\big(0,\hat x(0),\cl{q}\big)\in\Omega$ if and only if $\big(0,\hat x(0),q\big)\in\mathcal{S}_\Omega$. Thus, by the definition of $\mathcal{O}_2(\Omega)$ 
\[
(\mathcal{S}_\Omega)_0 = \left\{
\begin{array}{ll}
 \emptyset &\text{if $\hat x(0)\in\mathcal{O}_1(\Omega)$}\\[1mm]
 \{\hat x(0)\}\X\mathcal{O}_2(\Omega) &\text{otherwise}.
\end{array}\right.
\]
Consequently, by  the choice of $\mathcal{U}_\Omega$, when $\hat x(0)\in\mathcal{O}_1(\Omega)$ we have that $\mathcal{O}_2(\Omega)$ coincides with $\big(\mathrm{pr}_2 (\mathcal{U}_\Omega)\big)_0$, so that
\[
 \caratt{\big(\mathrm{pr}_2 (\mathcal{U}_\Omega)\big)_0}\big(\hat x(0)\big)\deg\big(w,\big(\mathrm{pr}_2 (\mathcal{U}_\Omega)\big)_0\big)
 =\caratt{\mathcal{O}_2(\Omega)}\big(\hat x(0)\big)\deg\big(w,\mathcal{O}_2(\Omega)\big) \neq 0
\]
Thus, by Theorem \ref{tdue} there exists a connected set $G$ of nontrivial starting triples for \eqref{eq:main} 
in $\mathcal{U}_\Omega$ whose closure in $\mathcal{D}$ meets the set
\begin{align*}
  \big\{(0, \hat x(0), q)\in \mathcal{U}_\Omega :   w(q) =0  \big\}
\end{align*}
and is not contained in any compact subset of $\mathcal{U}_\Omega$. It is not difficult to see that the set 
$\Gamma:=h^{-1}(G)$ has the properties required in the assertion.\smallskip

In order to conclude the proof we need to remove the local Lipschitzianity assumptions on $f_1$ and $f_2$. Denote by $\mathfrak{X}$ the subset of $X$ consisting of nontrivial $T$-triples. Let $\mathcal{Z}:=\big\{(0,\hat x,\cl{q})\in\Omega: w(q)=0\big\}$. As a consequence of Remark \ref{Xclos} we have that $\mathfrak{X}\cup\mathcal{Z}\subseteq X$ coincides with the closure $\cl{\mathfrak{X}}$ of $\mathfrak{X}$ in $[0,\infty)\X C_T(\R^k\X M)$. As in the case of starting triples, it is not difficult to show that $\cl{\mathfrak{X}}$ is locally compact. 

We only have to prove that the pair $\big(\cl{\mathfrak{X}}\cap\Omega\, , \, \mathcal{Z}\big)$ satisfies the 
assumptions of Lemma \ref{lemmatuno}. 
Assume by contradiction that there exists a relatively open compact subset $C$ of $\cl{\mathfrak{X}}\cap\Omega$ 
that contains $\mathcal{Z}$. Thus, there exists an open set $\mathcal{W}\subseteq\Omega$ such that 
$\mathcal{W}\cap\cl{\mathfrak{X}}=C$.

Since $C$ is compact, it is not difficult to show that $\mathcal{W}$ can be chosen with the following properties:
\begin{enumerate}\renewcommand{\theenumi}{\roman{enumi}}
 \item\label{l1} The closure $\cl{\mathcal{W}}$ of $\mathcal{W}$ in $[0,\infty)\X C_T(\R^k\X M)$ is a complete metric 
 space and is contained in $\Omega$;
 \item\label{l3} The boundary $\partial\mathcal{W}$ of $\mathcal{W}$ in $[0,\infty)\X C_T(\R^k\X M)$ does not 
 intersect $\cl{\mathfrak{X}}$;
 \item\label{l4} The set
 \[
  \Big\{\big(\lambda, x(t),y(t)\big)\in [0,\infty)\X\R^k\X M:(\lambda,x,y)\in\mathcal{W},\, t\in\R\Big\}
 \]
 is contained in a compact subset $K$ of $[0,\infty)\X\R^k\X M$.
\end{enumerate}
Clearly, the following subset
\[
\big\{(p,q)\in\R^k\X M: (0,\cl{p},\cl{q})\in\mathcal{W}\big\}
  =\mathcal{O}_1(\mathcal{W})\X\mathcal{O}_2(\mathcal{W})\subseteq \mathcal{O}_1(\Omega)\X\mathcal{O}_2(\Omega)
\]
is relatively compact. 

By known approximation results (see, e.g., \cite{difftop}), there exist sequences $\{f_1^i\}_{i\in\N}$ and $\{f_2^i\}_{i\in\N}$ of $T$\hbox{-}periodic smooth tangent vector fields uniformly approximating $f_1$ and $f_2$. Put 
\[
w_i(q):=\frac{1}{T}\int_0^Tf_2^i\big(t,\hat x(t),q,0\big)\, dt .
\]

As a consequence of Remark \ref{Xclos} we see that
\begin{equation}\label{condcost}
\hat x(0)\in\mathcal{O}_1(\mathcal{W}),
\end{equation} 
and also that $w_i$ is nonzero on the boundary of $\mathcal{O}_2(\mathcal{W})$ relative to $M$. Thus,
for $i\in\N$ large enough, we get
\[
\deg\big(w_i,\mathcal{O}_2(\mathcal{W})\big)
  =\deg\big(w,\mathcal{O}_2(\mathcal{W})\big)
  = \deg\big(w, \mathcal{O}_2(\Omega)\big).
\]
The last equality being a consequence of the excision property of the degree. Thus,
\begin{equation}\label{degi}
\deg\big(w_i,\mathcal{O}_2(\mathcal{W}) \big)\neq 0.
\end{equation}

Consider the system
\begin{equation}\label{duei}
\left\{
\begin{array}{l}
\dot x = A(t)x+c(t)+\lambda f_1^i(t,x,y,\lambda)\\
\dot y = \lambda f_2^i(t,x,y,\lambda),
\end{array}\right.
\end{equation}
and let $\mathfrak{X}_i$ be the set of nontrivial $T$-triples of \eqref{duei}.

For $i$ large enough, by \eqref{condcost} and \eqref{degi}, the first part of the proof applied to equation 
\eqref{duei} and the open set $\mathcal{W}$ yields a connected subset $\Gamma_i$ of $\mathfrak{X}_i\cap\mathcal{W}$ whose closure in $[0,\infty)\X C_T(\R^k\X M)$ is nonempty (as, for any $i$, it intersects  the set $\big\{(0,\hat x,\cl{q})\in\mathcal{W}:w_i(q)=0\big\}$) and is not contained in any compact subset of $\mathcal{W}$.

Let us prove that, for $i$ large enough, $\Gamma_i\cap\partial\mathcal{W}\neq\emptyset$. It is sufficient to show 
that $\cl{\mathfrak{X}_i}\cap\cl{\mathcal{W}}$ is compact. In fact, if $(\lambda,x,y)\in X_i\cap\cl{\mathcal{W}}$ 
we have, for any $t\in [0,T]$,
\[
\left|\big(\dot x(t),\dot y(t)\big)\right|^2_{k+s} \leq
\max_{\substack{(\mu,p,q)\in K\\ \tau \in [0,T]}}
\Big\{\big|A(\tau)p+c(\tau) +\mu f^i_1(\tau,p,q,\mu)|^2_k+|\mu f_1^i(\tau,p,q,\mu)|^2_s\Big\} ,
\]
where $K$ is as in \eqref{l4}, and $|\cdot|_k$, $|\cdot|_s$ and $|\cdot|_{k+s}$ denote the usual norms in $\R^k$, 
$\R^s$ and $\R^{k+s}$, respectively (recall that we are assuming $M\subseteq\R^s$). Hence, by Ascoli-Arzel\`a 
theorem, $\cl{\mathfrak{X}_i}\cap\cl{\mathcal{W}}$ is totally bounded and, consequently, compact since 
$\cl{\mathcal{W}}$ is complete by \eqref{l1}. Thus, for $i$ large enough, there exists a $T$-triple 
$(\lambda_i,x_i,y_i)\in\Gamma_i\cap\partial\mathcal{W}$ of \eqref{duei}. 
Again by Ascoli-Arzel\`a theorem, we may assume that $(x_i,y_i)\to (x_0,y_0)$ in 
$C_T(\R^k\X M)$, and $\lambda_i\to\lambda _0$ with $(\lambda _0,x_0,y_0)\in\partial\mathcal{W}$. 
Therefore
\begin{equation*}
\left\{
\begin{array}{l}
\dot x_0(t) = A(t)x(t)+c(t)+\lambda_0 f_1\big(t,x_0(t),y_0(t),\lambda_0\big)\\
\dot y_0(t) = \lambda_0 f_2\big(t,x_0(t),y_0(t),\lambda_0\big).
\end{array}\right.
\end{equation*}
Hence $(\lambda_0,x_0,y_0)$ is a $T$-triple in $\partial\mathcal{W}$. This contradicts the choice of $\mathcal{W}$, 
in particular the assumption \eqref{l3} that $\partial\mathcal{W}\cap\cl{\mathfrak{X}}=\emptyset$.
\end{proof}

\section{Further examples}\label{sec:examples}

The following example shows how Theorem \ref{tuno} can be used to get information about the set of $T$-periodic 
solutions of a parametrized differential equation containing a distributed continuous delay. The strategy
consists in reducing \eqref{eq:continuous-delay} to a coupled system of equations (see, e.g., \cite{Smi10}).
\begin{example}
Let us consider the following equation with distributed continuous delay 
\begin{equation} \label{eq:continuous-delay} 
\dot y(t) = \lambda h(t,y) + \lambda^2 \phi(t,y(t)) \int _{-\infty }^{0} y(t- \tau )e^\tau d\tau,\quad
  \lambda\geq 0,
\end{equation}
where $\phi \colon \R \X \R^n\to\R$ is continuous and $T$-periodic, $T>0$ given, in $t$. 
Since we are interested in $T$-periodic (hence bounded) solutions it makes sense to set
\begin{equation*}%\label{eq:def-y}
  x(t)=\lambda  \int _{-\infty }^{0} y(t- \tau)e^\tau d\tau.
\end{equation*}
Differentiating this relation and integrating by parts, we get
\begin{equation*}
  \begin{aligned}
    \dot x (t) = \lambda\int_{-\infty }^{0} \dot y(t- \tau )e^\tau d\tau
    = &-\lambda\Big[ y(t- \tau )e^\tau \Big]^{\tau=0}_{\tau=-\infty} 
            + \lambda \int _{-\infty }^{0} y(t- \tau)e^\tau d\tau\\
    =& x(t) - \lambda y(t).
  \end{aligned}
\end{equation*}
Thus, given any $T$-periodic solution $y$ of \eqref{eq:continuous-delay}, we have that $(x,y)$ is an
obviously $T$-periodic solution of the coupled system
\begin{equation} \label{eq:continuous-delay-rew}
  \left\{ \begin{array}{l}
            \dot x = x - \lambda y,\\
            \dot y = \lambda \big(h(t,y)+\phi (t, y)x\big),
          \end{array}\right.
\end{equation}
which is a system of type \eqref{eq:main}. Conversely, if $(x,y)$ is a $T$-periodic solution of 
\eqref{eq:continuous-delay-rew} then $y$ is a $T$-periodic solution of \eqref{eq:continuous-delay}.
Observe that the unique $T$-periodic solution of the first equation in \eqref{eq:continuous-delay-rew},
corresponding to $\lambda=0$, is $\hat x \equiv 0$. Take $\Omega=[0,\infty)\X C_T(\R^n\X\R^n)$ and put
\[
 w(q):=\frac{1}{T}\int_0^T h \big(t, q) dt.
\]
Assuming that $\deg\big(w,\R^n\big)\ne 0$, Theorem \ref{tuno} shows that there exists an unbounded 
connected set $\Gamma$ of nontrivial $T$-triples for \eqref{eq:continuous-delay-rew}, whose closure meets 
the set
\[
 \big\{ (0,\hat x, \cl{q})\in [0,\infty)\X C_T(\R^n\X\R^n):w(\cl{q})=0\big\}.
\]
Let $\pi_2\colon [0,\infty)\X C_T(\R^n\X\R^n)\to [0,\infty)\X C_T(\R^n)$ be as in \eqref{proiez}.
Then $\pi_2(\Gamma)$ is an unbounded connected set of pairs $(\lambda,y)$, with $y$ a $T$-periodic solution 
of \eqref{eq:continuous-delay} corresponding to $\lambda$, whose closure meets 
the set
\[
 \big\{ (0, \cl{q})\in [0,\infty)\X C_T(\R^n):w(\cl{q})=0\big\}.
\]
\end{example}

\begin{example}\label{duemolle} Consider the following coupled second order ODEs:
\begin{equation}\label{ex.1}
  \left\{\begin{array}{l}%\tag*{$(\theequation)_{\mu}$} %Da provare: effetto bellino!
      \ddot x= -x -\delta(t) -\alpha \dot x+\mu\phi(y-x),\\
      \ddot y=\mu\phi(x-y),
    \end{array}\right.
\end{equation}
describing a mechanical system as in figure \ref{fig:duemolle}.
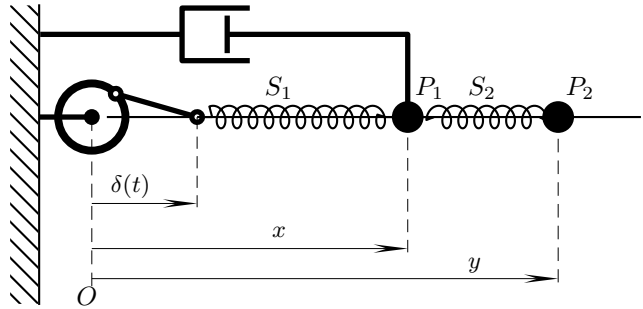
\begin{figure}[h!]
\begin{center}
  \begin{pspicture}(-4,-2.6)(5,1.8) \psset{unit=1cm}
    %rail
    \psline(-2.6,0)(4.5,0)
    %damper
    \psline[linewidth=2pt](-3.5,1.1)(-1.6,1.1)
    \psline[linewidth=2pt](-0.7,0.75)(-1.6,0.75)(-1.6,1.45)(-0.7,1.45)
    \psline[linewidth=2pt](1.4,0)(1.4,1.1)(-1,1.1)
    \psline[linewidth=2pt](-1,0.85)(-1,1.35)
    %wall
    \pspolygon[linecolor=white,fillstyle=vlines](-3.5,-2.5)(-3.5,1.5)(-3.9,1.5)(-3.9,-2.5)
    \psline(-3.5,-2.5)(-3.5,1.5)
    %actuator
    \psline[linewidth=2pt]{-*}(-3.5,0)(-2.8,0)
    \pscircle[linewidth=3pt](-2.8,0){0.5}
    \psline[linewidth=2pt]{o-o}(-2.482,0.31)(-1.4,0)
    % first spring and mass
    \pscoil[coilarm=.2cm,linewidth=0.8pt,coilwidth=.3cm]{-}(-1.4,0)(1.3,0)
    \qdisk(1.4,0){6pt}
    %second spring and mass
    \pscoil[coilarm=.15cm,linewidth=0.8pt,coilwidth=.3cm]{-}(1.6,0)(3.3,0)
    \qdisk(3.4,0){6pt} 
    %quotes and letters
    %O
    \put(-3,-2.5){$O$} 
    \psline[linewidth=0.25pt,linestyle=dashed](-2.8,0)(-2.8,-2.25)
    %delta
    \put(-2.55,-1){\small $\delta(t)$} 
    \psline[arrowsize=2pt 5,arrowlength=4,linewidth=0.25pt]{->}(-2.8,-1.15)(-1.4,-1.15)
    \psline[linewidth=0.25pt,linestyle=dashed](-1.4,0)(-1.4,-1.2)
    %P1/S1
    \put(1.5,0.3){$P_1$}
    \put(-0.5,0.3){$S_1$} 
    \psline[linewidth=0.25pt,linestyle=dashed](1.4,0)(1.4,-1.8)
    \psline[arrowsize=2pt 5,arrowlength=4,linewidth=0.25pt]{->}(-2.8,-1.75)(1.4,-1.75)
    \put(-0.4,-1.6){\small $x$}
    %P2/S2
    \put(3.5,0.3){$P_2$} 
    \put(2.2,0.3){$S_2$}
    \psline[linewidth=0.25pt,linestyle=dashed](3.4,0)(3.4,-2.2) 
    \psline[arrowsize=2pt 5,arrowlength=4,linewidth=0.25pt]{->}(-2.8,-2.15)(3.4,-2.15)
    \put(2.2,-2){\small $y$}
  \end{pspicture}
\end{center}
\caption{The mechanical system of Example \ref{duemolle}}\label{fig:duemolle}
\end{figure}
There are two equal masses $P_1$ and $P_2$ confined to a linear rail, a fixed point $O$ on the rail and two connecting springs $S_1$ and $S_2$ disposed as in figure \ref{fig:duemolle}. We assume that $S_1$ is a linear spring (it obeys Hooke's law) and $S_2$ is nonlinear (we assume that the elastic force is a strictly increasing odd function $\phi$ of the displacement). Moreover, $P_1$ is subject to friction and is attached to $O$ through an actuator that displaces periodically the leftmost extremum of the spring $S_1$ by an amount $\delta(t)$.  In \eqref{ex.1}, $\alpha>0$ is the friction coefficient and $\mu\geq 0$ is a parameter used to control 
the stiffness of $S_2$.

Let us now see how Theorem \ref{tuno} can be used to get information about the set of triples $(\mu,x,y)\in (0,\infty)\X C_T^1(\R^2)$, with $(x,y)$ a solution of \eqref{ex.1} corresponding to $\mu$. Here by $C_T^1(\R^2)$ we mean the Banach space of all the $C^1$ and $T$-periodic functions $\zeta\colon\R\to\R^2$ endowed with the standard $C^1$ norm.  

Put $\mu=\lambda^2$. Equation \eqref{ex.1} can be rewritten as a first order system as follows:
\begin{equation}\label{ex.1.firstorder}
  \left\{\begin{array}{l}
      \dot x_1 = x_2,\\ 
      \dot x_2 = -x_1 -\delta(t) -\alpha x_2 +\lambda^2\phi(y_1-x_1),\\
      \dot y_1 = \lambda y_2,\\
      \dot y_2 = \lambda\phi(x_1-y_1).
    \end{array}\right.
\end{equation}
Set 
\[
\xi:=\begin{pmatrix}x_1\\ x_2\end{pmatrix},\quad \eta:=\begin{pmatrix}y_1\\ y_2\end{pmatrix},\quad A:=\begin{pmatrix}0
    & 1\\-1 &-\alpha\end{pmatrix},\quad c(t):=\begin{pmatrix}0\\-\delta(t)\end{pmatrix},
\]   
and
\[
f_1(t,\xi,\eta,\lambda):=\begin{pmatrix}0\\ \lambda\phi(y_1-x_1)\end{pmatrix},\qquad
f_2(t,\xi,\eta,\lambda):=\begin{pmatrix}y_2\\ \phi(x_1-y_1)\end{pmatrix}.
\]
Clearly, since $\alpha>0$ the eigenvalues of $A$ are not purely immaginary, hence the non-$T$-resonance condition holds.
With these definitions, equation \eqref{ex.1.firstorder} becomes
\begin{equation}\label{ex.1.block}
  \left\{\begin{array}{l}
      \dot \xi = A\xi +c(t) +\lambda f_1(t,\xi,\eta,\lambda),\\
      \dot \eta = \lambda f_2(t,\xi,\eta,\lambda),
    \end{array}\right.
\end{equation}
which has the form required by Theorem~\ref{tuno}. 

Let $\hat\xi(t)=\big(\hat x_1(t),\hat x_2(t)\big)$ be the unique $T$-periodic solution of $\dot\xi=A\xi+c(t)$.  
Take $\Omega=[0,\infty)\times C_T(\R^2\X\R^2)$, and ssume that the degree, relative to $\Omega\cap\R^2$, of the vector field
\[
w(q)=\frac{1}{T}\int_0^Tf_2\big(t,\hat\xi(t),q,0\big)\,dt 
=\begin{pmatrix} q_2\\
  \frac{1}{T}\int_0^T\phi(\hat{x}_1(t)-q_1\big)\,dt
 \end{pmatrix},
\]
where $q=(q_1,q_2)$, is nonzero. Then, by Theorem~\ref{tuno}, \eqref{ex.1.block} has an unbounded connected set of $T$-triples with $\lambda>0$ that branches from its set of trivial $T$-pairs. This corresponds to a connected set of $T$-periodic solutions of \eqref{ex.1} as claimed.
\end{example}

\section*{Aknowledgements}

The authors have been supported by the Grup\-po Na\-zio\-na\-le per l'Ana\-li\-si Ma\-te\-ma\-ti\-ca, 
la Pro\-ba\-bi\-li\-t\`a e le lo\-ro Ap\-pli\-ca\-zio\-ni (GNAMPA) of the I\-sti\-tu\-to 
Na\-zio\-na\-le di Al\-ta Ma\-te\-ma\-ti\-ca (INdAM).

\end{document}